\newtheorem{theorem}{Theorem}
\newtheorem{example}{Example}
\newtheorem{lemma}[theorem]{Lemma}
\newcommand{\Ref}[1]{\mbox{(\ref{#1})}}
\newcommand{\bull}{\vrule height 1.8ex width 1.0ex depth 0ex}
\newenvironment{proof}{{\bf Proof : }}{\hfill$\bull$\medskip}
\newcommand{\Dev}[2]{{\frac{\partial {#1}}{\partial {#2}}}}
\newcommand{\AEW}{\textrm{a.e.}~~}
\newcommand{\Norm}[2]{\left\|#1\right\|_{#2}}
\newcommand{\Inner}[3]{\left<#1,#2\right>_{#3}}
\newcommand{\Set}[2]{{\left\{{#1}~;~{#2}\right\}}}
\newcommand{\Cinv}{C_{\textrm{inv}}}
\title{Numerical verification of solutions for nonlinear parabolic problems}
\author{
Kouji Hashimoto$^{1}$, Takehiko Kinoshita$^{2}$, Mitsuhiro T. Nakao$^{3}$ 
}
\date{\small\sl 
$^{1}$Division of Infant Education, Nakamura Gakuen Junior College, Fukuoka 814-0198, Japan\\
$^{2}$Research Institute for Information Technology, Kyushu University, Fukuoka 819-0395, Japan\\
$^{3}$Faculty of Science and Engineering, Waseda University, Tokyo 169-8555, Japan
}
\begin{document}
\maketitle
\begin{abstract}
In this paper, we present a numerical verification method of solutions for nonlinear parabolic initial boundary value problems.
Decomposing the problem into a nonlinear part and an initial value part, we apply Nakao's projection method, which is based on the full-discrete finite element method with constructive error estimates, to the nonlinear part and use the theoretical analysis for the heat equation to the initial value part, respectively. 
We show some verified examples for solutions of nonlinear problems from initial value to the neighborhood of the stationary solutions, which confirm us the actual effectiveness of our method.
\end{abstract}
\section{Introduction}
Let $\nu$ be a positive constant, and let $\Omega \subset \mathbb{R}^d$ $(d \in \{1,2,3\})$ be a bounded polygonal or polyhedral domain, $J:=(0,T) $ a bounded open interval. Then we consider the following nonlinear parabolic initial boundary value problems:
\begin{eqnarray}\label{para-nonlinear}
\begin{array}{rclcl}
\Dev{}{t}u-\nu\Delta u&=&g(x,t,u,\nabla u)&{\rm in}&\Omega\times J,\\
u(x,t)&=&0&{\rm on}&\partial\Omega\times J,\\
u(0)&=&u_0&{\rm in}&\Omega,
\end{array}
\end{eqnarray}
where $g$ is a nonlinear function in $u$ and $u_0$ a function in the space variable $x$ with appropriate assumptions.

As well known, the problems of the form \Ref{para-nonlinear} appear in various kinds of fields in science and technology, and many mathematical and numerical approaches are proposed up to now. 
In this paper, we consider the numerical method to prove the existence of solutions for the problem \Ref{para-nonlinear}. 
Our approach is based on the finite element approximation of a simple heat equation and its constructive a priori error estimates, which is the general principle of our numerical verification method established in \cite{Nakao-NM},\cite{Nakao-JCAM},\cite{Nakao-QNA}, but we presently use a different approximation scheme from these works. 
Namely, we previously used, in \cite{Nakao-NM}, the error estimates given in \cite{Nakao-SIAM} in the finite element Galerkin method with application of an interpolation in time by computing the exact fundamental solution for semidiscretization in space. 
Therefore, we need some complicated numerical algorithms to compute rigorously the matrix exponential. 
This also leads to decrease in computational efficiency and accuracy to realize the desired verification. Here, since we construct a full-discrete approximation scheme by using the tensor product of finite element subspaces for space and time directions, the computational algorithm is much simplified as well as it seems to be very natural and familiar from the numerical point of view. (see \cite{Hashimoto-JJIAM} for details) 
And we also formulated a method to verify the solution on a time evolutional sense, while the results so far had fixed time intervals. The effectiveness of our method is confirmed by some numerical examples for the realistic problems. 
On the other hand, there are some related research works by different approaches on the verified computation of the problem \Ref{para-nonlinear}. For example, in \cite{Miuguchi}, some results are presented based on the semigroup theory and also the problems on the verification of periodic orbits are considered by researchers in dynamical systems, e.g., \cite{Gameiro-3}, using the Fourier spectral method.

\setcounter{equation}{0}
\section{Notations and Preliminaries}
The notations to the spaces and preliminaries in this paper are very similar to that presented in \cite{Hashimoto-JJIAM}, we include these here for the sake of convenience.
\subsection{Notations}
We denote $L^2(\Omega)$ and $H^1(\Omega)$ as the usual Lebesgue and the first order $L^2$-Sobolev spaces on $\Omega$, respectively, and by $\Inner{u}{v}{L^2(\Omega)}:=\int_\Omega u(x)v(x)\,dx$ the natural inner product for $u$, $v\in L^2(\Omega)$. 
By considering the boundary and initial conditions, we define the following subspaces of $H^1_0(\Omega)$ and $V^1(J)$ as
\[ H_0^1(\Omega):=\Set{u \in H^1(\Omega)}{u = 0 \textrm{ on } \partial\Omega} \ \mbox{and} \ V^1(J):=\Set{u \in H^1(J)}{u(0) = 0}, \]
respectively. 
These are Hilbert spaces with inner products
\[ \Inner{u}{v}{H_0^1(\Omega)}:=\Inner{\nabla u}{\nabla v}{L^2(\Omega)^d}\ \mbox{and} \ \Inner{u}{v}{V^1(J)}:=\Inner{\frac{du}{dt}}{\frac{dv}{dt}}{L^2(J)}. \]
Let $X(\Omega)$ be a subspace of $L^2(\Omega)$ defined by $X(\Omega):=\Set{u \in L^2(\Omega)}{\triangle u \in L^2(\Omega)}$. 
We define 
\[
V^1\bigl(J;Y\bigr):=\Set{u \in L^2\bigl(J;Y\bigr)}{\Dev{u}{t} \in L^2\bigl(J;Y\bigr),\ u(0)=0\ {\rm in}\ Y},
\] 
with inner product $\Inner{u}{v}{V^1\bigl(J;Y\bigr)}:=\Inner{\frac{du}{dt}}{\frac{dv}{dt}}{L^2\bigl(J;Y\bigr)}$ for Hilbert space $Y$.
In the following discussion, abbreviations like $L^2H_0^1$ for $L^2\bigl(J;H_0^1(\Omega)\bigr)$ will often be used. 
We set $V(\Omega,J):= V^1\bigl(J;L^2(\Omega)\bigr) \cap L^2\bigl(J;H_0^1(\Omega)\bigr)$.
Moreover, we denote the partial differential operator $\triangle_t:V(\Omega,J) \cap L^2\bigl(J;X(\Omega)\bigr) \to L^2\bigl(J;L^2(\Omega)\bigr)$ by $\triangle_t:=\Dev{}{t}-\nu\triangle$.

Now let $S_h(\Omega)$ be a finite-dimensional subspace of $H_0^1(\Omega)$ dependent on the parameter $h$.
For example, $S_h(\Omega)$ is considered to be a finite element space with mesh size $h$.
Let $n$ be the degree of freedom for $S_h(\Omega)$, and let $\{\phi_i\}_{i=1}^n \subset H_0^1(\Omega)$ be the basis of $S_h(\Omega)$.
Similarly, let $V_k^1(J)$ be an approximation subspace of $V^1(J)$ dependent on the parameter $k$. 
Let $m$ be the degree of freedom for $V_k^1(J)$, and let $\{\psi_i\}_{i=1}^m \subset V_k^1(J)$ be the basis of $V_k^1(J)$.
Let $V^1\bigl(J;S_h(\Omega)\bigr)$ be a subspace of $V(\Omega,J)$ corresponding to the semidiscretized approximation in the spatial direction.
We define the $H_0^1$-projection $P_h^1 u \in S_h(\Omega)$ of any element $u \in H_0^1(\Omega)$ by the following variational equation:
\begin{align}
\Inner{\nabla (u-P_h^1u)}{\nabla v_h}{L^2(\Omega)^d} = 0, \quad \forall v_h \in S_h(\Omega). \label{eq:IPP:Ph1 def}
\end{align}
Similarly, for any element $u \in V^1(J)$, the $V^1$-projection $P^k_1:V^1(J) \to V_k^1(J)$ is defined as follows:
\begin{eqnarray*}
\Inner{\frac{d}{dt}(u-P_1^ku)}{\frac{d}{dt}v_k}{L^2(J)} = 0, \quad \forall v_k \in V_k^1(J).
\end{eqnarray*}

Now let $\Pi_k:V^1(J) \to V_k^1(J)$ be an interpolation operator.
Namely, if the nodal points of $J$ are given by $0=t_0<t_1<\cdots<t_m=T$, then for an arbitrary $u \in V^1(J)$, the interpolation $\Pi_ku$ is defined as the function in $V_k^1(J)$ satisfying:
\begin{align}
u(t_i) = \bigl(\Pi_ku\bigr)(t_i), \quad \forall i \in \{0,\ldots,m\}. \label{eq:IPP:Pik def}
\end{align}
\subsection{Preliminaries}
We know that there exist constants $C_\Omega(h)>0$, $C_J(k)>0$ and $\Cinv(h)>0$ satisfying
\begin{eqnarray*}
\Norm{u-P_h^1u}{H_0^1(\Omega)}
&\leq &C_\Omega(h)\Norm{\triangle u}{L^2(\Omega)}, \quad \forall u\in H_0^1(\Omega) \cap X(\Omega), \\
\Norm{u-\Pi_ku}{L^2(J)}
&\leq& C_J(k)\Norm{u}{V^1(J)}, \quad \forall u\in V^1(J),\\
\Norm{u_h}{H_0^1(\Omega)}
&\leq& C_{inv}(h)\Norm{u_h}{L^2(\Omega)}, \quad \forall u_h\in S_h(\Omega).
\end{eqnarray*}
Moreover, there exists a Poincar\'e constant $C_p>0$ satisfying
\begin{eqnarray*}
\Norm{u}{L^2(\Omega)}
&\leq& C_p\Norm{u}{H^1_0(\Omega)}, \quad \forall u\in H^1_0(\Omega).
\end{eqnarray*}
For example, if $\Omega$ is a bounded rectangular domain in $\mathbb{R}^d$, and $S_h(\Omega)$ is the piecewise bi-linear (Q1) finite element space, then they can be taken by $C_\Omega(h)=\frac{h}{\pi}$ (see, e.g., \cite{Nakao 1998 best}) and $C_{inv}(h)=\frac{\sqrt{12}}{h_{\min}}$, where $h_{\min}$ is the minimum mesh size for $\Omega$ (see, e.g., \cite[Theorem 1.5]{Schultz 1973}).
Moreover, if $V_k^1(J)$ is the P1-finite element space, then it can be taken by $C_J(k)=\frac{k}{\pi}$ (see, e.g., \cite[Theorem 2.4]{Schultz 1973}).

From \cite[Lemma 2.2]{Kinoshita 2011}, if $V_k^1(J)$ is a P1-finite element space (i.e., the basis functions $\psi_i$ are piecewise linear functions), then $P^k_1$ coincides with $\Pi_k$. For any element $u \in V(\Omega,J)$, we define the semidiscrete projection $P_hu \in V^1\bigl(J;S_h(\Omega)\bigr)$ by the following weak form:
\begin{eqnarray*}
\Inner{\Dev{}{t}\bigl(u(t)-P_hu(t)\bigr)}{v_h}{L^2(\Omega)} + \nu\Inner{\nabla \bigl(u(t)-P_hu(t)\bigr)}{\nabla v_h}{L^2(\Omega)^d} = 0 \label{eq:IPP:Ph def} \\ \quad \forall v_h \in S_h(\Omega)~~\AEW t \in J, \nonumber 
\end{eqnarray*}
where $\AEW$means an abbreviation for `almost everywhere'.

Let define the space $S_h^k(\Omega,J)$ as the tensor product $V_k^1(J) \otimes S_h(\Omega)$ which corresponds to a full discretization subspace of $V(\Omega,J)$, and let $\{\varphi_i\}_{ i=1}^{mn}$ be a basis of $S_h^k(\Omega,J)$.
Moreover, we define the full-discretization operator $P_h^k:V(\Omega,J) \to S_h^k(\Omega,J)$ by $P_h^k:=\Pi_kP_h$.
In addition, we denote the matrix norm induced from the Euclidean 2-norm in $\mathbb{R}^{mn}$ by $\|\cdot\|_{E}$, and the transposed matrix of the matrix ${\bf X}$ by ${\bf X}^{\rm T}$.

First, we present some known results on the a priori error estimates for the full-discretization operator $P_h^k$.
\begin{theorem}(see Theorem 5.5, 5.6, and proof of Theorem 4.6 in \cite{Nakao-SIAM})\label{error-estimate-base}
For an arbitrary $u\in V(\Omega,J) \cap L^2\bigl(J;X(\Omega)\bigr)$, we have the following estimations
\begin{eqnarray*}
\Norm{u-P_h^ku}{L^2\bigl(J;H_0^1(\Omega)\bigr)}
&\leq& C_1(h,k)\Norm{\triangle_t u}{L^2\bigl(J;L^2(\Omega)\bigr)},\label{C1hk}\\
\Norm{u-P_h^ku}{L^2\bigl(J;L^2(\Omega)\bigr)}
&\leq& C_0(h,k)\Norm{\triangle_t u}{L^2\bigl(J;L^2(\Omega)\bigr)},\label{C0hk}\\
\Norm{u(T)-P_hu(T)}{L^2(\Omega)}
&\leq& c_0(h)\Norm{\triangle_t u}{L^2\bigl(J;L^2(\Omega)\bigr)},\label{c0hk}
\end{eqnarray*}
where $C_1(h,k):=\frac{2}{\nu}C_\Omega(h) + \Cinv(h)C_J(k)$, $C_0(h,k):=\frac{8}{\nu}C_\Omega(h)^2+C_J(k)$ and $c_0(h):=\sqrt{\frac{8}{\nu}}C_\Omega(h)$.
\end{theorem}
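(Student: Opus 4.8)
The plan is to reduce the full-discrete error to two pieces by inserting the space-semidiscrete projection $P_h u$ and writing, for $u\in V(\Omega,J)\cap L^2\bigl(J;X(\Omega)\bigr)$,
\[ u - P_h^k u = (u - P_h u) + (P_h u - \Pi_k P_h u), \]
so that the triangle inequality splits each left-hand side into a spatial semidiscretization error $u - P_h u$ and a temporal interpolation error $P_h u - \Pi_k P_h u$ (recall $P_h^k=\Pi_k P_h$). Before estimating these I would record the a priori bounds obtained by testing $\triangle_t u=\Dev{}{t}u-\nu\triangle u$ with $\Dev{}{t}u$ and integrating over $J$, using $u(0)=0$: this yields $\Norm{\Dev{}{t}u}{L^2L^2}\le\Norm{\triangle_t u}{L^2L^2}$ and hence $\Norm{\triangle u}{L^2L^2}\le\frac2\nu\Norm{\triangle_t u}{L^2L^2}$. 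The same computation applied to the equation satisfied by $P_h u$ (test with $\Dev{}{t}P_h u$) gives the discrete stability bound $\Norm{\Dev{}{t}P_h u}{L^2L^2}\le\Norm{\triangle_t u}{L^2L^2}$, which couples the temporal term back to the data.

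For the temporal term I would exploit that $P_h u-\Pi_k P_h u$ lies in $S_h(\Omega)$ at almost every time $t$, since $\Pi_k$ acts only in the time variable and preserves the spatial finite element space. The inverse inequality then converts the $H_0^1(\Omega)$-norm into an $L^2(\Omega)$-norm at the cost of the factor $\Cinv(h)$, and the Bochner-valued form of the interpolation estimate $\Norm{v-\Pi_k v}{L^2(J)}\le C_J(k)\Norm{v}{V^1(J)}$, applied with values in $L^2(\Omega)$, together with the discrete stability bound gives
\[ \Norm{P_h u-\Pi_k P_h u}{L^2H_0^1}\le\Cinv(h)C_J(k)\Norm{\Dev{}{t}P_h u}{L^2L^2}\le\Cinv(h)C_J(k)\Norm{\triangle_t u}{L^2L^2}, \]
which is the second summand of $C_1(h,k)$. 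For the $L^2L^2$-estimate the inverse inequality is unnecessary, so only $C_J(k)$ remains, matching the second summand of $C_0(h,k)$.

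For the spatial term I would introduce the elliptic $H_0^1$-projection $P_h^1$ and split $u-P_h u=(u-P_h^1 u)+(P_h^1 u-P_h u)$, the second (discrete) component lying in $S_h(\Omega)$. After integration in time the elliptic error estimate bounds the first component in $L^2H_0^1$ by $C_\Omega(h)\Norm{\triangle u}{L^2L^2}$, which combined with the parabolic regularity bound above produces the term $\frac2\nu C_\Omega(h)$; the discrete component $P_h^1 u-P_h u$ is then controlled by an energy estimate for the error equation it satisfies, in which the $H_0^1$-orthogonality of $P_h^1$ cancels the gradient cross term. The $L^2L^2$-estimate is obtained by an Aubin--Nitsche duality argument in which the elliptic estimate is applied twice, so that the elliptic constant is squared and the term $\frac8\nu C_\Omega(h)^2$ appears. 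The terminal estimate $\Norm{u(T)-P_h u(T)}{L^2(\Omega)}\le c_0(h)\Norm{\triangle_t u}{L^2L^2}$ follows from the same energy identity integrated up to $t=T$, where $u(0)=P_h u(0)=0$ removes the initial contribution and leaves the constant $\sqrt{8/\nu}\,C_\Omega(h)$.

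The hard part will be tracking the sharp constants rather than the qualitative structure, which is essentially forced by the decomposition. In particular, bounding the discrete component $P_h^1 u-P_h u$ of the semidiscrete error without inflating the constant $\frac2\nu C_\Omega(h)$ requires a careful choice of energy test function and full use of the elliptic orthogonality to remove cross terms, and the duality argument for the $L^2L^2$-bound must be arranged so that the two applications of the elliptic estimate coalesce into a single $C_\Omega(h)^2$. Verifying that the scalar temporal constant $C_J(k)$ survives in the Bochner-valued setting is routine but should be checked, as the entire temporal estimate rests on it.
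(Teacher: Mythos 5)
Note first that the paper does not prove Theorem \ref{error-estimate-base} at all: it is imported verbatim from \cite{Nakao-SIAM}, so your attempt can only be judged against the structure that the stated constants force. Your skeleton is the right one and much of it is sound: the decomposition $u-P_h^ku=(u-P_hu)+(P_hu-\Pi_kP_hu)$, the a priori bounds $\Norm{\Dev{}{t}u}{L^2L^2}\le\Norm{\triangle_tu}{L^2L^2}$ and $\Norm{\triangle u}{L^2L^2}\le\frac{2}{\nu}\Norm{\triangle_tu}{L^2L^2}$, the discrete stability $\Norm{\Dev{}{t}P_hu}{L^2L^2}\le\Norm{\triangle_tu}{L^2L^2}$, and the treatment of the time-direction term (the difference $P_hu-\Pi_kP_hu$ stays in $S_h(\Omega)$ pointwise in $t$, so the inverse inequality and the Bochner form of the $C_J(k)$ estimate apply) correctly produce the summands $\Cinv(h)C_J(k)$ and $C_J(k)$.

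The genuine gap is in the semidiscrete part $u-P_hu$, i.e.\ precisely where the constants $\frac{2}{\nu}C_\Omega(h)$, $\frac{8}{\nu}C_\Omega(h)^2$ and $\sqrt{8/\nu}\,C_\Omega(h)$ must come from. Writing $e:=u-P_hu=\rho+\theta$ with $\rho:=u-P_h^1u$ and $\theta:=P_h^1u-P_hu$, your plan --- triangle inequality, elliptic estimate for $\rho$, separate energy estimate for $\theta$ --- cannot succeed: the bound $\Norm{\rho}{L^2H_0^1}\le C_\Omega(h)\Norm{\triangle u}{L^2L^2}\le\frac{2}{\nu}C_\Omega(h)\Norm{\triangle_tu}{L^2L^2}$ already exhausts the entire claimed constant, so any nonzero bound for $\Norm{\theta}{L^2H_0^1}$ overshoots; moreover, the natural (Thom\'ee-type) energy estimate for $\theta$ is driven by $\Dev{}{t}\rho$ and hence needs $\Dev{}{t}u\in L^2\bigl(J;H_0^1(\Omega)\bigr)$, which the hypothesis $u\in V(\Omega,J)\cap L^2\bigl(J;X(\Omega)\bigr)$ does not supply. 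The missing idea is to never split $e$ by the triangle inequality: expand $\nu\Norm{\nabla e}{L^2(\Omega)^d}^2$ over the splitting, use the parabolic Galerkin orthogonality with test function $\theta$, the elliptic orthogonality (twice), and substitute $\Dev{}{t}u=\triangle_tu+\nu\triangle u$; then the two terms $\nu\Norm{\nabla\rho}{L^2(\Omega)^d}^2$ cancel exactly and one obtains, a.e.\ in $t$,
\begin{equation*}
\nu\Norm{\nabla e(t)}{L^2(\Omega)^d}^2+\frac{1}{2}\frac{d}{dt}\Norm{e(t)}{L^2(\Omega)}^2
=\Inner{\triangle_tu-\Dev{}{t}P_hu}{\rho}{L^2(\Omega)}.
\end{equation*}
Integrating over $J$ (with $e(0)=0$) and using $\Norm{\rho}{L^2L^2}\le C_\Omega(h)^2\Norm{\triangle u}{L^2L^2}$ together with the stability bounds gives
\begin{equation*}
\nu\Norm{\nabla e}{L^2L^2}^2+\frac{1}{2}\Norm{e(T)}{L^2(\Omega)}^2\le\frac{4}{\nu}C_\Omega(h)^2\Norm{\triangle_tu}{L^2L^2}^2,
\end{equation*}
which delivers the first summand $\frac{2}{\nu}C_\Omega(h)$ of $C_1(h,k)$ and $c_0(h)=\sqrt{8/\nu}\,C_\Omega(h)$ simultaneously. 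Finally, your $L^2L^2$ estimate cannot be a pointwise-in-time Aubin--Nitsche argument, since $e(t)$ is not an elliptic projection error; it requires a space--time duality against the backward adjoint heat problem $-\Dev{}{t}z-\nu\triangle z=g$, $z(T)=0$, in which the elliptic constant enters twice through $z$ and the bound $\Norm{\Dev{}{t}e}{L^2L^2}\le2\Norm{\triangle_tu}{L^2L^2}$ is used, producing $\frac{4}{\nu}C_\Omega(h)^2+\frac{4}{\nu}C_\Omega(h)^2=\frac{8}{\nu}C_\Omega(h)^2$.
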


Next, we define the bilinear form $a_0(\cdot,\cdot)$ by
\begin{eqnarray*}\label{bilinear-form}
a_0(\phi,\psi)&:=&
\Inner{\Dev{}{t}\phi}{\Dev{}{t}\psi}{L^2\bigl(J;L^2(\Omega)\bigr)}
+\nu\Inner{\nabla\phi}{\Dev{}{t}\nabla\psi}{L^2\bigl(J;L^2(\Omega)\bigr)^d}
\end{eqnarray*}
for $\phi\in V(\Omega,J)$ and $\psi\in V^1\bigl(J;H^1_0(\Omega)\bigr)$.
Then, for any element $u \in V(\Omega,J)$, we define the full-discrete projection $Q_h^ku \in S_h^k(\Omega,J)$ by the following weak form:
\begin{eqnarray*}\label{def-projection}
a_0(u-Q_h^ku,v_h^k) = 0, \quad \forall v_h^k \in S_h^k(\Omega,J).
\end{eqnarray*}
It is readily seen that, if $u\in V(\Omega,J) \cap L^2\bigl(J;X(\Omega)\bigr)$ then
we have
\begin{eqnarray}\label{FEM-solution}
a_0(Q_h^ku,v_h^k)&=&\Inner{\triangle_t u}{\Dev{}{t}v_h^k}{L^2\bigl(J;L^2(\Omega)\bigr)}\quad \forall v_h^k \in S_h^k(\Omega,J).
\end{eqnarray} 

In order to present the error estimates for $Q_h^ku$, we need to define 
several kinds of matrices as below.

The matrices ${\bf A}$ and ${\bf M}$ in $\mathbb{R}^{mn \times mn}$ are defined by 
$$
{\bf A}_{i,j}:=\Inner{\Dev{}{t}\varphi_j}{\Dev{}{t}\varphi_i}{L^2\bigl(J;L^2(\Omega)\bigr)},\quad 
{\bf M}_{i,j}:=\Inner{\nabla\varphi_j}{\nabla\varphi_i}{L^2\bigl(J;L^2(\Omega)\bigr)^d},\quad
\forall i,j \in \{1,\ldots,mn\},
$$
respectively.
Since matrices ${\bf A}$ and ${\bf M}$ are symmetric and positive definite, we can denote them by using the Cholesky decomposition as ${\bf A}={\bf A}^{\frac{1}{2}}{\bf A}^{\frac{\rm T}{2}}$ and ${\bf M}={\bf M}^{\frac{1}{2}}{\bf M}^{\frac{\rm T}{2}}$, respectively.
Also, we define ${\bf B}$ in $\mathbb{R}^{mn \times mn}$ by
$$
{\bf B}_{i,j}:=\Inner{\nabla\varphi_j}{\Dev{}{t}\nabla\varphi_i}{L^2\bigl(J;L^2(\Omega)\bigr)^d} \quad\forall i,j \in \{1,\ldots,mn\}.
$$
Further define ${\bf W}$ and ${\bf U}$ in $\mathbb{R}^{mn \times mn}$ by 
$$
{\bf U}_{i,j}:=\Inner{\varphi_j}{\varphi_i}{L^2\bigl(J;L^2(\Omega)\bigr)},\quad
{\bf W}_{i,j}:=\Inner{\Dev{}{t}\nabla\varphi_j}{\Dev{}{t}\nabla\varphi_i}{L^2\bigl(J;L^2(\Omega)\bigr)^d} \quad 
\forall i,j \in \{1,\ldots,mn\},
$$
respectively.
Note that, since matrices ${\bf U}$ and ${\bf W}$ are symmetric and positive definite, similarly as above, we have the expressions such as ${\bf U}={\bf U}^{\frac{1}{2}}{\bf U}^{\frac{T}{2}}$ and ${\bf W}={\bf W}^{\frac{1}{2}}{\bf W}^{\frac{T}{2}}$, respectively.
Moreover, we define the symmetric and positive semidefinite matrix ${\bf Y}={\bf Y}^{\frac{1}{2}}{\bf Y}^{\frac{T}{2}}$ in $\mathbb{R}^{mn \times mn}$ by
$$
{\bf Y}_{i,j}:=\Inner{\varphi_j(\cdot,T)}{\varphi_i(\cdot,T)}{L^2(\Omega)}\quad\forall i,j \in \{1,\ldots,mn\}.
$$

Now let
\begin{eqnarray*}
\gamma_1&:=&\nu\|{\bf M}^{\frac{T}{2}}({\bf A}+\nu {\bf B})^{-1}{\bf W}^{\frac{1}{2}}\|_E,\\
\gamma_0&:=&\nu\|{\bf U}^{\frac{T}{2}}({\bf A}+\nu {\bf B})^{-1}{\bf W}^{\frac{1}{2}}\|_E,\\
\gamma_T&:=&\nu\|{\bf Y}^{\frac{T}{2}}({\bf A}+\nu {\bf B})^{-1}{\bf W}^{\frac{1}{2}}\|_E.
\end{eqnarray*}
Then we have the following result.
\begin{theorem}(Theorem 6 in \cite{Hashimoto-JJIAM})\label{thm:error}
Assume that $V_k^1(J)$ is the P1 finite element space.
For an arbitrary $u\in V(\Omega,J) \cap L^2\bigl(J;X(\Omega)\bigr)$, we have the following estimations.
\begin{eqnarray*}
\Norm{u-Q_h^ku}{L^2\bigl(J;H_0^1(\Omega)\bigr)}
&\leq& \tilde{C}_1(h,k)\Norm{\triangle_t u}{L^2\bigl(J;L^2(\Omega)\bigr)},\\
\Norm{u-Q_h^ku}{L^2\bigl(J;L^2(\Omega)\bigr)}
&\leq& \tilde{C}_0(h,k)\Norm{\triangle_t u}{L^2\bigl(J;L^2(\Omega)\bigr)},\\
\Norm{u(T)-Q_h^ku(T)}{L^2(\Omega)}
&\leq& \tilde{c}_0(h)\Norm{\triangle_t u}{L^2\bigl(J;L^2(\Omega)\bigr)},
\end{eqnarray*}
where 
\begin{eqnarray*}
\tilde{C}_1(h,k)&:=&C_1(h,k)+C_J(k)C_{inv}(h)\gamma_1,\\
\tilde{C}_0(h,k)&:=&C_0(h,k)+C_J(k)C_{inv}(h)\gamma_0,\\
\tilde{c}_0(h,k)&:=&c_0(h)+C_J(k)C_{inv}(h)\gamma_T.
\end{eqnarray*}
\end{theorem}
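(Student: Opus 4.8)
The plan is to measure the error relative to the intermediate full-discrete projection $P_h^k=\Pi_kP_h$, whose error is already controlled by Theorem~\ref{error-estimate-base}, and then to bound the purely discrete remainder by a matrix estimate matching the definitions of $\gamma_1,\gamma_0,\gamma_T$. For each of the three norms I would begin with the triangle inequality, for instance $\Norm{u-Q_h^ku}{L^2H_0^1}\le\Norm{u-P_h^ku}{L^2H_0^1}+\Norm{P_h^ku-Q_h^ku}{L^2H_0^1}$, estimating the first summand directly by Theorem~\ref{error-estimate-base}; for the value at $t=T$ I would additionally use that $T=t_m$ is an interpolation node, so that $P_h^ku(T)=P_hu(T)$ and the first summand is exactly $c_0(h)\Norm{\triangle_t u}{L^2L^2}$. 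Everything then reduces to estimating the discrete function $w:=Q_h^ku-P_h^ku\in S_h^k(\Omega,J)$.

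Writing ${\bf w}$ for the coefficient vector of $w$, I would derive a matrix equation for it. Since $a_0(u-Q_h^ku,v_h^k)=0$ by definition of $Q_h^k$, testing against each basis function gives $a_0(w,\varphi_i)=a_0(u-P_h^ku,\varphi_i)=:r_i$, and because $({\bf A}+\nu{\bf B})_{i,j}=a_0(\varphi_j,\varphi_i)$ is the matrix of $a_0$ in the basis $\{\varphi_i\}$, this reads $({\bf A}+\nu{\bf B}){\bf w}={\bf r}$. The decisive and, I expect, hardest step is to simplify $r_i$. Taking $\varphi_i=\psi_a\phi_b$ in the tensor basis, so that $\Dev{}{t}\varphi_i=\psi_a'\phi_b$ and $\Dev{}{t}\nabla\varphi_i=\psi_a'\nabla\phi_b$, the pointwise-in-$t$ orthogonality defining the semidiscrete projection $P_h$ lets me replace $u$ by $P_hu$ inside the integrand, leaving only $e:=(I-\Pi_k)P_hu=P_hu-P_h^ku$. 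The time-derivative part $\int_J\psi_a'\Inner{\Dev{}{t}e}{\phi_b}{L^2(\Omega)}\,dt$ then vanishes: for fixed $x$ it equals $\Inner{\frac{d}{dt}\psi_a}{\frac{d}{dt}e(x,\cdot)}{L^2(J)}$, which is zero because under the P1 assumption $\Pi_k=P_1^k$ is the $V^1(J)$-projection and $\psi_a\in V_k^1(J)$. Hence $r_i=\nu\Inner{\nabla e}{\Dev{}{t}\nabla\varphi_i}{L^2(J;L^2(\Omega)^d)}$, a vector I denote $\nu\tilde{\bf r}$.

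With ${\bf w}=\nu({\bf A}+\nu{\bf B})^{-1}\tilde{\bf r}$, I would express each target norm as a weighted Euclidean norm of ${\bf w}$, namely $\Norm{w}{L^2H_0^1}=\Norm{{\bf M}^{\frac{\rm T}{2}}{\bf w}}{E}$, $\Norm{w}{L^2L^2}=\Norm{{\bf U}^{\frac{\rm T}{2}}{\bf w}}{E}$ and $\Norm{w(T)}{L^2(\Omega)}=\Norm{{\bf Y}^{\frac{\rm T}{2}}{\bf w}}{E}$. Inserting the identity ${\bf W}^{\frac{1}{2}}{\bf W}^{-\frac{1}{2}}$ and using submultiplicativity of $\Norm{\cdot}{E}$, e.g. $\Norm{{\bf M}^{\frac{\rm T}{2}}{\bf w}}{E}=\nu\Norm{{\bf M}^{\frac{\rm T}{2}}({\bf A}+\nu{\bf B})^{-1}{\bf W}^{\frac{1}{2}}\,{\bf W}^{-\frac{1}{2}}\tilde{\bf r}}{E}\le\gamma_1\Norm{{\bf W}^{-\frac{1}{2}}\tilde{\bf r}}{E}$, reproduces $\gamma_1,\gamma_0,\gamma_T$ respectively. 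A Cauchy--Schwarz argument then bounds the remaining factor: writing ${\bf z}={\bf W}^{-1}\tilde{\bf r}$ and $z_h^k=\sum_i z_i\varphi_i$, one has $\Norm{{\bf W}^{-\frac{1}{2}}\tilde{\bf r}}{E}^2=\tilde{\bf r}^{\rm T}{\bf W}^{-1}\tilde{\bf r}=\Inner{\nabla e}{\Dev{}{t}\nabla z_h^k}{L^2(J;L^2(\Omega)^d)}\le\Norm{\nabla e}{L^2(J;L^2(\Omega)^d)}\Norm{\Dev{}{t}\nabla z_h^k}{L^2(J;L^2(\Omega)^d)}$, whence $\Norm{{\bf W}^{-\frac{1}{2}}\tilde{\bf r}}{E}\le\Norm{e}{L^2H_0^1}$.

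It remains to estimate $\Norm{e}{L^2H_0^1}$. Since $\Pi_k$ acts only in time it commutes with $\nabla$, so the interpolation bound $C_J(k)$ applied pointwise in $x$ gives $\Norm{e}{L^2H_0^1}=\Norm{(I-\Pi_k)\nabla P_hu}{L^2(J;L^2(\Omega)^d)}\le C_J(k)\Norm{\Dev{}{t}\nabla P_hu}{L^2(J;L^2(\Omega)^d)}$, and the inverse inequality $\Cinv(h)$ then yields $\le C_J(k)\Cinv(h)\Norm{\Dev{}{t}P_hu}{L^2L^2}$. The last factor I would control by the stability estimate $\Norm{\Dev{}{t}P_hu}{L^2L^2}\le\Norm{\triangle_t u}{L^2L^2}$: testing the semidiscrete equation with $v_h=\Dev{}{t}P_hu(t)$, using $\nu\Inner{\nabla P_hu}{\nabla\Dev{}{t}P_hu}{L^2(\Omega)^d}=\frac{\nu}{2}\frac{d}{dt}\Norm{\nabla P_hu}{L^2(\Omega)^d}^2$, integrating over $J$ and discarding the nonnegative endpoint term (here $P_hu(0)=0$), gives $\Norm{\Dev{}{t}P_hu}{L^2L^2}^2\le\Norm{\triangle_t u}{L^2L^2}^2$. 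Chaining these inequalities converts each $\gamma_\bullet\Norm{e}{L^2H_0^1}$ into $C_J(k)\Cinv(h)\gamma_\bullet\Norm{\triangle_t u}{L^2L^2}$, which added to the Theorem~\ref{error-estimate-base} terms produces exactly $\tilde{C}_1$, $\tilde{C}_0$ and $\tilde{c}_0$. The genuine obstacle is the cancellation of the time-derivative term in the second paragraph; the stability bound and the matrix manipulations are routine energy and linear-algebra arguments.
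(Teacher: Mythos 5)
Your proof is correct, and the key points all check out: the cancellation $a_0(u-P_hu,\varphi_i)=0$ from the pointwise-in-$t$ semidiscrete orthogonality, the vanishing of the time-derivative residual term via $\Pi_k=P_1^k$ (which is precisely where the P1 hypothesis enters), the duality bound $\|{\bf W}^{-\frac{1}{2}}\tilde{\bf r}\|_E\le\Norm{e}{L^2\bigl(J;H_0^1(\Omega)\bigr)}$, the stability estimate $\Norm{\Dev{}{t}P_hu}{L^2\bigl(J;L^2(\Omega)\bigr)}\le\Norm{\triangle_t u}{L^2\bigl(J;L^2(\Omega)\bigr)}$, and the observation that $P_h^ku(T)=P_hu(T)$ since $T$ is an interpolation node. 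Be aware that this paper itself gives no proof of Theorem~\ref{thm:error} (it is imported as Theorem~6 of \cite{Hashimoto-JJIAM}), but your decomposition $u-Q_h^ku=(u-P_h^ku)+(P_h^ku-Q_h^ku)$ with the coefficient vector of the discrete part solving $({\bf A}+\nu{\bf B}){\bf w}=\nu\tilde{\bf r}$ is exactly the argument that the additive form of $\tilde{C}_1,\tilde{C}_0,\tilde{c}_0$ and the definitions of $\gamma_1,\gamma_0,\gamma_T$ encode, i.e.\ the approach of the cited reference.
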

Note that these constants $\gamma_1,\gamma_0$ and $\gamma_T$ can be rigorously estimated by using appropriate numerical computations with self-validating algorithms, e.g., a tool box in MATLAB developed by Rump \cite{Rump INTLAB} and so on.

\setcounter{equation}{0}
\section{Norm estimation of the linearized operators}
In order to verify a solution of the problem \Ref{para-nonlinear} by Newton's method, we first consider the following linear parabolic problem with homogeneous initial condition.
\begin{eqnarray}\label{para-linear}
\begin{array}{rclcl}
\Dev{}{t}w-\nu\Delta w+b\cdot\nabla w+cw&=&f&{\rm in}&\Omega\times J,\\
w(x,t)&=&0&{\rm on}&\partial\Omega\times J,\\
w(0)&=&0&{\rm in}&\Omega,
\end{array}
\end{eqnarray}
where $f\in L^2\bigl(J;L^2(\Omega)\bigr)$ is a given function, and we assume that $b\in L^\infty\bigl(J;L^\infty(\Omega)\bigr)^d$, $c\in L^\infty\bigl(J;L^\infty(\Omega)\bigr)$.

Let $\Delta_t^{-1}:L^2\bigl(J;L^2(\Omega)\bigr)\rightarrow V(\Omega,J)\cap L^2\bigl(J;X(\Omega)\bigr)$ be a solution operator such that
$$
\psi=\Delta_t^{-1}\phi
\quad\Leftrightarrow\quad
\begin{array}{rclcl}
\Dev{}{t}\psi-\nu\Delta \psi&=&\phi&{\rm in}&\Omega\times J,\\
\psi(x,t)&=&0&{\rm on}&\partial\Omega\times J,\\
\psi(0)&=&0&{\rm in}&\Omega,
\end{array}
$$
where $\phi\in L^2\bigl(J;L^2(\Omega)\bigr)$.
Then the problem \Ref{para-linear} can be written as the following fixed point form 
\begin{eqnarray}\label{fixed point}
w&=&\Delta_t^{-1}(-b\cdot\nabla w-cw+f).
\end{eqnarray}
Moreover, the problem \Ref{fixed point} is decomposed as
\begin{eqnarray}
Q_h^kw&=&Q_h^k\Delta_t^{-1}(-b\cdot\nabla w-cw+f),\label{fix:projection}\\
(I-Q_h^k)w&=&(I-Q_h^k)\Delta_t^{-1}(-b\cdot\nabla w-cw+f).\nonumber
\end{eqnarray}

We now define the bilinear form ${a}(\cdot,\cdot)$ by
\begin{eqnarray*}
{a}(\phi,\psi)&:=&
{a}_0(\phi,\psi)+\Inner{b\cdot\nabla\phi+c\phi}{\Dev{}{t}\psi}{L^2\bigl(J;L^2(\Omega)\bigr)}
\quad\mbox{for}\quad\phi\in V(\Omega,J),\ \psi\in V^1\bigl(J;H^1_0(\Omega)\bigr).
\end{eqnarray*}
Then by \Ref{FEM-solution}, the equation \Ref{fix:projection} is equivalent to the following.
\begin{eqnarray*}
{a}(Q_h^kw,v_h^k)=\Inner{-b\cdot\nabla w_\bot-c w_\bot+f}{\Dev{}{t}v_h^k}{L^2\bigl(J;L^2(\Omega)\bigr)} \quad \forall v_h^k\in S_h^k(\Omega,J),
\end{eqnarray*}
where $w_\bot:=w-Q_h^k w$.
We now define $r_h^k\in S_h^k(\Omega,J)$ satisfying
$$
\Inner{\Dev{}{t}r_h^k}{\Dev{}{t}v_h^k}{L^2\bigl(J;L^2(\Omega)\bigr)}
=\Inner{-b\cdot\nabla w_\bot-c w_\bot+f}{\Dev{}{t}v_h^k}{L^2\bigl(J;L^2(\Omega)\bigr)} \quad \forall v_h^k\in S_h^k(\Omega,J).
$$
Note that from the definition of $r_h^k$, it follows that
\begin{eqnarray}\label{rhk}
\Norm{r_h^k}{V^1\bigl(J;L^2(\Omega)\bigr)}
\le\Norm{-b\cdot\nabla w_\bot -c w_\bot+f}{L^2\bigl(J;L^2(\Omega)\bigr)}.
\end{eqnarray}
Also \Ref{fix:projection} is written as 
\begin{eqnarray}\label{linearized-finite}
{a}(Q_h^kw,v_h^k)=\Inner{\Dev{}{t}r_h^k}{\Dev{}{t}v_h^k}{L^2\bigl(J;L^2(\Omega)\bigr)}\quad\forall v_h^k\in S_h^k(\Omega,J).
\end{eqnarray}
We define the matrix ${\bf G} \in \mathbb{R}^{mn \times mn}$ by
${\bf G}_{i,j}:={a}(\varphi_j,\varphi_i)$ for all $i,j \in \{1,\ldots,mn\}$.
From the fact that $Q_h^kw$ and $r_h^k$ belong to $S_h^k(\Omega,J)$, there exist coefficient vectors $\mathfrak{w}:=(\mathfrak{w}_1,\ldots,\mathfrak{w}_{mn})^T$ and $\mathfrak{r}:=(\mathfrak{r}_1,\ldots,\mathfrak{r}_{mn})^T$ in $\mathbb{R}^{mn}$ such that
$Q_h^kw= \sum_{i=1}^{mn} \mathfrak{w}_i\varphi_i = \varphi^T\mathfrak{w}$ and $r_h^k= \sum_{i=1}^{mn} \mathfrak{r}_i\varphi_i = \varphi^T\mathfrak{r}$.
Then, the variational equation \Ref{linearized-finite} can be rewritten as the following matrix from
\begin{eqnarray}\label{GMat-e}
{\bf G}\mathfrak{w}={\bf A}\mathfrak{r}.
\end{eqnarray}
From \Ref{GMat-e}, we can obtain the following estimations.
\begin{eqnarray*}
\Norm{Q_h^kw}{L^2\bigl(J;H^1_0(\Omega)\bigr)}
&=&\|{\bf M}^{\frac{T}{2}}\mathfrak{w}\|_E
=\|{\bf M}^{\frac{T}{2}}{\bf G}^{-1}{\bf A}\mathfrak{r}\|_E
\le\|{\bf M}^{\frac{T}{2}}{\bf G}^{-1}{\bf A}^{\frac{1}{2}}\|_E\|{\bf A}^{\frac{T}{2}}\mathfrak{r}\|_E,\\
\Norm{Q_h^kw}{L^2\bigl(J;L^2(\Omega)\bigr)}
&=&\|{\bf U}^{\frac{T}{2}}\mathfrak{w}\|_E
=\|{\bf U}^{\frac{T}{2}}{\bf G}^{-1}{\bf A}\mathfrak{r}\|_E
\le\|{\bf U}^{\frac{T}{2}}{\bf G}^{-1}{\bf A}^{\frac{1}{2}}\|_E\|{\bf A}^{\frac{T}{2}}\mathfrak{r}\|_E,\\
\Norm{Q_h^kw(T)}{L^2(\Omega)}
&=&\|{\bf Y}^{\frac{T}{2}}\mathfrak{w}\|_E
=\|{\bf Y}^{\frac{T}{2}}{\bf G}^{-1}{\bf A}\mathfrak{r}\|_E
\le\|{\bf Y}^{\frac{T}{2}}{\bf G}^{-1}{\bf A}^{\frac{1}{2}}\|_E\|{\bf A}^{\frac{T}{2}}\mathfrak{r}\|_E.
\end{eqnarray*}
Moreover, using the results in Theorem \ref{thm:error}, we have by \Ref{rhk}
\begin{eqnarray*}
\|{\bf A}^{\frac{T}{2}}\mathfrak{r}\|_E
&=&\Norm{r_h^k}{V^1\bigl(J;L^2(\Omega)\bigr)}\\
&\le&\Norm{-b\cdot\nabla w_\bot-c w_\bot+f}{L^2\bigl(J;L^2(\Omega)\bigr)}\\
&\le&\left(\tilde{C}_1(h,k)C_b+\tilde{C}_0(h,k)C_c\right)\Norm{\Delta_tw}{L^2\bigl(J;L^2(\Omega)\bigr)}+\Norm{f}{L^2\bigl(J;L^2(\Omega)\bigr)},
\end{eqnarray*}
where $C_b:=\sqrt{\sum_{i=1}^d\Norm{b_i}{L^\infty\bigl(J;L^\infty(\Omega)\bigr)}^2}$ and $C_c:=\Norm{c}{L^\infty\bigl(J;L^\infty(\Omega)\bigr)}$. 
Thus letting 
\begin{eqnarray*}
M_1:=\|{\bf M}^{\frac{T}{2}}{\bf G}^{-1}{\bf A}^{\frac{1}{2}}\|_E,\quad
M_0:=\|{\bf U}^{\frac{T}{2}}{\bf G}^{-1}{\bf A}^{\frac{1}{2}}\|_E,\quad
M_T:=\|{\bf Y}^{\frac{T}{2}}{\bf G}^{-1}{\bf A}^{\frac{1}{2}}\|_E,
\end{eqnarray*}
and $\tau(h,k):=\tilde{C}_1(h,k)C_b+\tilde{C}_0(h,k)C_c$, it follows that
\begin{eqnarray}
\Norm{Q_h^kw}{L^2\bigl(J;H^1_0(\Omega)\bigr)}
&\le&M_1\left(\tau(h,k)\Norm{\Delta_t w}{L^2\bigl(J;L^2(\Omega)\bigr)}+\Norm{f}{L^2\bigl(J;L^2(\Omega)\bigr)}\right),\label{est1}\\
\Norm{Q_h^kw}{L^2\bigl(J;L^2(\Omega)\bigr)}
&\le&M_0\left(\tau(h,k)\Norm{\Delta_t w}{L^2\bigl(J;L^2(\Omega)\bigr)}+\Norm{f}{L^2\bigl(J;L^2(\Omega)\bigr)}\right),\label{est2}\\
\Norm{Q_h^kw(T)}{L^2(\Omega)}
&\le&M_T\left(\tau(h,k)\Norm{\Delta_t w}{L^2\bigl(J;L^2(\Omega)\bigr)}+\Norm{f}{L^2\bigl(J;L^2(\Omega)\bigr)}\right).\label{est3}
\end{eqnarray}
On the other hand, using \Ref{est1}, \Ref{est2} and Theorem \ref{thm:error}, we obtain
\begin{eqnarray}
\Norm{\Delta_t w}{L^2\bigl(J;L^2(\Omega)\bigr)}
&=&\Norm{-b\cdot\nabla w -cw+f}{L^2\bigl(J;L^2(\Omega)\bigr)}\nonumber\\
&\le&C_b\left(\Norm{Q_h^k w}{L^2\bigl(J;H^1_0(\Omega)\bigr)}+\Norm{w_\bot}{L^2\bigl(J;H^1_0(\Omega)\bigr)}\right)\nonumber\\
&&+C_c\left(\Norm{Q_h^k w}{L^2\bigl(J;L^2(\Omega)\bigr)}+\Norm{w_\bot}{L^2\bigl(J;L^2(\Omega)\bigr)}\right)+\Norm{f}{L^2\bigl(J;L^2(\Omega)\bigr)}\nonumber\\
&\le&\tau(h,k){\cal E}\Norm{\Delta_t w}{L^2\bigl(J;L^2(\Omega)\bigr)}+{\cal E}\Norm{f}{L^2\bigl(J;L^2(\Omega)\bigr)},\label{est4}
\end{eqnarray}
where ${\cal E}:=M_1C_b+M_0C_c+1$.

Hence, we have the following result.
\begin{lemma}\label{le:Cdelta}
Let $w\in V(\Omega,J) \cap L^2\bigl(J;X(\Omega)\bigr)$ be a solution of the problem \Ref{para-linear}.
Assuming that $\kappa(h,k)\equiv\tau(h,k){\cal E}<1$ we have 
\begin{eqnarray*}
\Norm{\Delta_t w}{L^2\bigl(J;L^2(\Omega)\bigr)}
&\le&C_{\Delta}\Norm{f}{L^2\bigl(J;L^2(\Omega)\bigr)},\quad\mbox{where}\quad C_{\Delta}:=\frac{{\cal E}}{1-\kappa(h,k)}.
\end{eqnarray*}
Moreover, it follows that
\begin{eqnarray*}
\Norm{Q_h^kw}{L^2\bigl(J;H^1_0(\Omega)\bigr)}
&\le&M_1C_{Q_h^k}\Norm{f}{L^2\bigl(J;L^2(\Omega)\bigr)},\\
\Norm{Q_h^kw}{L^2\bigl(J;L^2(\Omega)\bigr)}
&\le&M_0C_{Q_h^k}\Norm{f}{L^2\bigl(J;L^2(\Omega)\bigr)},\\
\Norm{Q_h^kw(T)}{L^2(\Omega)}
&\le&M_TC_{Q_h^k}\Norm{f}{L^2\bigl(J;L^2(\Omega)\bigr)},
\end{eqnarray*}
where $C_{Q_h^k}:=\tau(h,k)C_{\Delta}+1$.
\end{lemma}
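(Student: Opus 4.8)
The plan is to observe that essentially all of the analytic work has already been carried out in the estimates \Ref{est1}--\Ref{est4} preceding the statement, so that the proof reduces to an elementary absorption argument followed by a back-substitution.

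First I would establish the bound on $\Norm{\Delta_t w}{L^2L^2}$. The inequality \Ref{est4} is self-referential: it controls $\Norm{\Delta_t w}{L^2L^2}$ in terms of itself, with coefficient $\kappa(h,k)=\tau(h,k){\cal E}$, plus a multiple of $\Norm{f}{L^2L^2}$. Under the standing hypothesis $\kappa(h,k)<1$ the factor $1-\kappa(h,k)$ is strictly positive, so I can transpose the $\Delta_t w$ term appearing on the right-hand side of \Ref{est4} to the left and divide by $1-\kappa(h,k)$. This yields precisely $\Norm{\Delta_t w}{L^2L^2}\le C_{\Delta}\Norm{f}{L^2L^2}$ with $C_{\Delta}={\cal E}/(1-\kappa(h,k))$, which is the first assertion.

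For the ``moreover'' part I would simply insert this bound into the three estimates \Ref{est1}, \Ref{est2}, \Ref{est3}, which were already derived from \Ref{GMat-e} together with Theorem \ref{thm:error}. Each of them has the common shape $M_\ast(\tau(h,k)\Norm{\Delta_t w}{L^2L^2}+\Norm{f}{L^2L^2})$ with $M_\ast\in\{M_1,M_0,M_T\}$. Replacing $\Norm{\Delta_t w}{L^2L^2}$ by $C_{\Delta}\Norm{f}{L^2L^2}$ and factoring out $\Norm{f}{L^2L^2}$ leaves the scalar $\tau(h,k)C_{\Delta}+1=C_{Q_h^k}$, so that each right-hand side becomes $M_\ast C_{Q_h^k}\Norm{f}{L^2L^2}$. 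This produces the three stated inequalities at once.

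I do not expect a genuine obstacle here, since the substantive estimates are already in place; the one point deserving care is the role of the assumption $\kappa(h,k)<1$, which is exactly what guarantees both the positivity required to divide and the finiteness of $C_{\Delta}$. It functions as the usual contraction/coercivity threshold and is the sole place in the argument where smallness of the discretization parameters is invoked.
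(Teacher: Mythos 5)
Your proposal is correct and matches the paper's own argument: the paper's proof simply cites \Ref{est1}--\Ref{est4}, and the absorption step (moving the $\tau(h,k)\mathcal{E}\Norm{\Delta_t w}{L^2(J;L^2(\Omega))}$ term to the left in \Ref{est4}, dividing by $1-\kappa(h,k)$) followed by back-substitution into \Ref{est1}--\Ref{est3} is exactly what that citation leaves implicit. No gaps; your emphasis on $\kappa(h,k)<1$ as the sole smallness requirement is also faithful to the paper.
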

\begin{proof}
From \Ref{est1}, \Ref{est2}, \Ref{est3} and \Ref{est4}, this proof is completed.
\end{proof}

Using above lemma, we can obtain the following result.
\begin{lemma}\label{le:Cdt}
Under the same assumption in Lemma \ref{le:Cdelta}, we have the following estimations:
\begin{eqnarray*}
\Norm{w}{V^1\bigl(J;L^2(\Omega)\bigr)}
\le C_{\Delta}\Norm{f}{L^2\bigl(J;L^2(\Omega)\bigr)},\quad
\Norm{w(t)}{H^1_0(\Omega)}
\le\sqrt{\frac{1}{\nu}}C_{\Delta}\Norm{f}{L^2\bigl(J;L^2(\Omega)\bigr)},\quad\forall t\in J.
\end{eqnarray*}
\end{lemma}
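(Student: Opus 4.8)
The plan is to reduce both estimates to the bound on $\Norm{\Delta_t w}{L^2\bigl(J;L^2(\Omega)\bigr)}$ already secured in Lemma \ref{le:Cdelta}, via the classical energy identity for the heat equation. Setting $\phi:=\Delta_t w\in L^2\bigl(J;L^2(\Omega)\bigr)$, the function $w$ solves $\Dev{w}{t}-\nu\Delta w=\phi$ with $w(0)=0$ and homogeneous Dirichlet data, and the regularity $w\in V(\Omega,J)\cap L^2\bigl(J;X(\Omega)\bigr)$ is precisely what makes the manipulations below legitimate: $\Dev{w}{t}\in L^2\bigl(J;L^2(\Omega)\bigr)$, while $\Delta w(t)\in L^2(\Omega)$ and $w(t)\in H_0^1(\Omega)$ for a.e.\ $t$.

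First I would test the equation against $\Dev{w}{t}$ in $L^2(\Omega)$, obtaining for a.e.\ $t$
\[
\Inner{\phi}{\Dev{w}{t}}{L^2(\Omega)}=\Norm{\Dev{w}{t}}{L^2(\Omega)}^2+\frac{\nu}{2}\frac{d}{dt}\Norm{\nabla w}{L^2(\Omega)^d}^2,
\]
where the cross term is rewritten by integrating by parts in space — the boundary contribution vanishes because $\Dev{w}{t}\in H_0^1(\Omega)$ for a.e.\ $t$ — and then recognising $\Inner{\nabla w}{\nabla\Dev{w}{t}}{L^2(\Omega)^d}=\tfrac12\frac{d}{dt}\Norm{\nabla w}{L^2(\Omega)^d}^2$.

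Next I would integrate this identity over $(0,s)$ for arbitrary $s\in J$. Using $w(0)=0$ to annihilate the lower endpoint of the energy term, and Young's inequality $\Inner{\phi}{\Dev{w}{t}}{L^2(\Omega)}\le\tfrac12\Norm{\phi}{L^2(\Omega)}^2+\tfrac12\Norm{\Dev{w}{t}}{L^2(\Omega)}^2$ on the right, I would absorb half of the time-derivative term into the left-hand side to get
\[
\frac12\int_0^s\Norm{\Dev{w}{t}}{L^2(\Omega)}^2\,dt+\frac{\nu}{2}\Norm{\nabla w(s)}{L^2(\Omega)^d}^2\le\frac12\int_0^s\Norm{\phi}{L^2(\Omega)}^2\,dt\le\frac12\Norm{\Delta_t w}{L^2\bigl(J;L^2(\Omega)\bigr)}^2.
\]
Both left-hand terms being nonnegative, taking $s=T$ in the first one gives $\Norm{w}{V^1\bigl(J;L^2(\Omega)\bigr)}=\Norm{\Dev{w}{t}}{L^2\bigl(J;L^2(\Omega)\bigr)}\le\Norm{\Delta_t w}{L^2\bigl(J;L^2(\Omega)\bigr)}$, while keeping $s$ arbitrary in the second gives $\Norm{w(s)}{H_0^1(\Omega)}=\Norm{\nabla w(s)}{L^2(\Omega)^d}\le\sqrt{1/\nu}\,\Norm{\Delta_t w}{L^2\bigl(J;L^2(\Omega)\bigr)}$. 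Substituting the Lemma \ref{le:Cdelta} bound $\Norm{\Delta_t w}{L^2\bigl(J;L^2(\Omega)\bigr)}\le C_\Delta\Norm{f}{L^2\bigl(J;L^2(\Omega)\bigr)}$ then finishes both claims.

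I expect the only genuine subtlety to be the justification that $t\mapsto\Norm{\nabla w(t)}{L^2(\Omega)^d}^2$ is absolutely continuous with the stated derivative, so that integrating the energy identity in time is valid and the pointwise value $w(s)$ is meaningful. This is the standard Lions--Magenes regularity fact for parabolic equations, guaranteed here by $w\in V(\Omega,J)\cap L^2\bigl(J;X(\Omega)\bigr)$ (equivalently $\Dev{w}{t},\Delta w\in L^2\bigl(J;L^2(\Omega)\bigr)$), which also embeds $w$ into $C(\bar J;H_0^1(\Omega))$; everything else is the routine Cauchy--Schwarz and Young bookkeeping indicated above.
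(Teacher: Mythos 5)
Your proof is correct and follows essentially the same route as the paper: testing the equation $\Delta_t w=\phi$ against $\Dev{w}{t}$, integrating in time using $w(0)=0$, applying Young's inequality to absorb the $\int_0^s\Norm{\Dev{w}{t}}{L^2(\Omega)}^2\,dt$ term, and then substituting the bound on $\Norm{\Delta_t w}{L^2\bigl(J;L^2(\Omega)\bigr)}$ from Lemma \ref{le:Cdelta}. The only difference is that you make explicit the regularity justification (absolute continuity of the energy and the spatial integration by parts), which the paper leaves implicit.
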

\begin{proof}
Since $w(x,0)=0$ in $\Omega$, observe that for $t\in J$
\begin{eqnarray*}
\int_0^t\Norm{\Dev{}{t}w(s)}{L^2(\Omega)}^2 ds+\frac{\nu}{2}\Norm{w(t)}{H^1_0(\Omega)}^2
&=&\int_0^t\Inner{\Delta_tw(s)}{\Dev{}{t}w(s)}{L^2(\Omega)} ds.\\
&\le&\frac{1}{2}\int_0^t\Norm{\Delta_tw(s)}{L^2(\Omega)}^2 ds+\frac{1}{2}\int_0^t\Norm{\Dev{}{t}w(s)}{L^2(\Omega)}^2 ds\\
&\le&\frac{1}{2}\Norm{\Delta_t w}{L^2\bigl(J;L^2(\Omega)\bigr)}^2+\frac{1}{2}\int_0^t\Norm{\Dev{}{t}w(s)}{L^2(\Omega)}^2 ds,
\end{eqnarray*}
which implies
$$
\int_0^t\Norm{\Dev{}{t}w(s)}{L^2(\Omega)}^2 ds+\nu\Norm{w(t)}{H^1_0(\Omega)}^2
\le\Norm{\Delta_t w}{L^2\bigl(J;L^2(\Omega)\bigr)}^2.
$$
Therefore, this proof is completed.
\end{proof}

Finally we get the following main results in this section.
\begin{theorem}\label{thm:main}
Under the same assumption in Lemma \ref{le:Cdelta}, we have the following estimations:
\begin{eqnarray*}
\Norm{w}{L^2\bigl(J;H^1_0(\Omega)\bigr)}
&\le&{\cal M}_1\Norm{f}{L^2\bigl(J;L^2(\Omega)\bigr)},\quad
{\cal M}_1:=M_1C_{Q_h^k}+\tilde{C}_1(h,k)C_{\Delta},\\
\Norm{w}{L^2\bigl(J;L^2(\Omega)\bigr)}
&\le&{\cal M}_0\Norm{f}{L^2\bigl(J;L^2(\Omega)\bigr)},\quad
{\cal M}_0:=M_0C_{Q_h^k}+\tilde{C}_0(h,k)C_{\Delta},\\
\Norm{w(T)}{L^2(\Omega)}
&\le&{\cal M}_T\Norm{f}{L^2\bigl(J;L^2(\Omega)\bigr)},\quad
{\cal M}_T:=M_TC_{Q_h^k}+\tilde{c}_0(h,k)C_{\Delta}.
\end{eqnarray*}
\end{theorem}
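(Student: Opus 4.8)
The plan is to exploit the splitting $w = Q_h^k w + w_\bot$, with $w_\bot := w - Q_h^k w = (I-Q_h^k)w$, that has already been introduced in this section, and to estimate each of the three target quantities of $w$ by the triangle inequality, treating the projected part $Q_h^k w$ and the complementary part $w_\bot$ separately and then adding the two bounds. Since the three desired constants ${\cal M}_1$, ${\cal M}_0$, ${\cal M}_T$ are each a sum of two terms, this additive structure is exactly what the decomposition produces.

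First I would dispose of the projected contribution. The three bounds on $\Norm{Q_h^kw}{L^2 H^1_0}$, $\Norm{Q_h^kw}{L^2 L^2}$ and $\Norm{Q_h^kw(T)}{L^2(\Omega)}$ are already available verbatim from Lemma \ref{le:Cdelta}, yielding the factors $M_1 C_{Q_h^k}$, $M_0 C_{Q_h^k}$ and $M_T C_{Q_h^k}$ in front of $\Norm{f}{L^2 L^2}$. These supply precisely the first summand of ${\cal M}_1$, ${\cal M}_0$ and ${\cal M}_T$, respectively.

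Next I would handle the complementary part $w_\bot$. Because $w \in V(\Omega,J)\cap L^2\bigl(J;X(\Omega)\bigr)$ by hypothesis, Theorem \ref{thm:error} applies with $u=w$ and gives $\Norm{w_\bot}{L^2 H^1_0}\le \tilde{C}_1(h,k)\Norm{\triangle_t w}{L^2 L^2}$, $\Norm{w_\bot}{L^2 L^2}\le \tilde{C}_0(h,k)\Norm{\triangle_t w}{L^2 L^2}$, and $\Norm{w(T)-Q_h^kw(T)}{L^2(\Omega)}\le \tilde{c}_0(h,k)\Norm{\triangle_t w}{L^2 L^2}$. The remaining move is to substitute the a priori bound $\Norm{\triangle_t w}{L^2 L^2}\le C_{\Delta}\Norm{f}{L^2 L^2}$ established in the first part of Lemma \ref{le:Cdelta} (this is where the running assumption $\kappa(h,k)<1$ is consumed), which turns the three estimates into the second summands $\tilde{C}_1(h,k)C_{\Delta}$, $\tilde{C}_0(h,k)C_{\Delta}$ and $\tilde{c}_0(h,k)C_{\Delta}$.

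Adding the projected and complementary bounds through $\Norm{w}{\cdot}\le\Norm{Q_h^kw}{\cdot}+\Norm{w_\bot}{\cdot}$ then reproduces ${\cal M}_1$, ${\cal M}_0$ and ${\cal M}_T$ exactly and closes the argument. I do not expect a genuine analytic obstacle, since every ingredient has been prepared in advance; the only point that needs care is bookkeeping — recognizing that the factor $\Norm{\triangle_t w}{L^2 L^2}$ on the right-hand side of the Theorem \ref{thm:error} estimates is the very quantity controlled by Lemma \ref{le:Cdelta}, so that both the projected and the complementary bounds are expressed as multiples of the single data norm $\Norm{f}{L^2 L^2}$ and can be summed cleanly.
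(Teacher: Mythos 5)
Your proposal is correct and follows exactly the paper's own argument: split $w = Q_h^kw + (w - Q_h^kw)$, apply the triangle inequality, bound the projected part by Lemma \ref{le:Cdelta} and the complementary part by Theorem \ref{thm:error} combined with the bound $\Norm{\triangle_t w}{L^2\bigl(J;L^2(\Omega)\bigr)} \le C_{\Delta}\Norm{f}{L^2\bigl(J;L^2(\Omega)\bigr)}$ from Lemma \ref{le:Cdelta}. The paper states this in one line; your write-up merely makes the same bookkeeping explicit.
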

\begin{proof}
By using the following triangle inequalities,
\begin{eqnarray*}
\Norm{w}{L^2\bigl(J;H^1_0(\Omega)\bigr)}
&\le&\Norm{Q_h^kw}{L^2\bigl(J;H^1_0(\Omega)\bigr)}+\Norm{w-Q_h^kw}{L^2\bigl(J;H^1_0(\Omega)\bigr)}\\
\Norm{w}{L^2\bigl(J;L^2(\Omega)\bigr)}
&\le&\Norm{Q_h^kw}{L^2\bigl(J;L^2(\Omega)\bigr)}+\Norm{w-Q_h^kw}{L^2\bigl(J;L^2(\Omega)\bigr)}\\
\Norm{w(T)}{L^2(\Omega)}
&\le&\Norm{Q_h^kw(T)}{L^2(\Omega)}+\Norm{w(T)-Q_h^kw(T)}{L^2(\Omega)},
\end{eqnarray*}
we obtain the desired result from Theorem \ref{thm:error} and Lemma \ref{le:Cdelta}.
\end{proof}
\setcounter{equation}{0}
\section{A numerical verification algorithm for nonlinear problems}
In this section, we present a numerical verification method of solutions for nonlinear problems \Ref{para-nonlinear} with $g(u):=g(x,t,u,\nabla u)$. Dividing interval $J$ into $m$ subintervals, we use a time evolving algorithm in the below. 

Now, in order to get an appropriate approximate solution for the problem \Ref{para-nonlinear} on $\Omega\times J$, we use a finite element subspace $\bar{S}_h^k(\Omega,J):=\bar{V}_k^1(J) \otimes \bar{S}_h(\Omega)$, where we suppose that $\bar{V}_k^1(J)\subset H^1(J)$ and $\bar{S}_h(\Omega)\subset H^2(\Omega)$, respectively.
Let $\bar{u}_{h,i}^{k}\in \bar{S}_h^k(\Omega,J_i)$ be an approximate solution of the problem \Ref{para-nonlinear} on $\Omega\times J_i$, where $J_i=(t_{i-1},t_{i})\subset\mathbb{R}$ is a subinterval of $J$ with $t_0 =0$, and $T_i\equiv |J_i|=t_i-t_{i-1}$ for $i=1,\cdots,l$.

First we consider the problem \Ref{para-nonlinear} on $\Omega\times J_i$.
Letting $\bar{u}:=u-\bar{u}_{h,i}^{k}$, the problem \Ref{para-nonlinear} is equivalent to the following residual equation
\begin{eqnarray}\label{para-res}
\begin{array}{rclcl}
\frac{\partial}{\partial t}\bar{u}-\nu\Delta \bar{u}&=&g(\bar{u}+\bar{u}_{h,i}^{k})-g(\bar{u}_{h,i}^{k})+\delta_i&{\rm in}&\Omega\times J_i,\\
\bar{u}(x,t)&=&0&{\rm on}&\partial\Omega\times J_i,\\
\bar{u}(t_{i-1})&=&\epsilon_i&{\rm in}&\Omega,
\end{array}
\end{eqnarray}
where $\epsilon_1=u_{0}-\bar{u}_{h,1}^k(t_0)$ and $\delta_i:=g(\bar{u}_{h,i}^{k})-\frac{\partial}{\partial t}\bar{u}_{h,i}^{k}+\nu\Delta\bar{u}_{h,i}^{k}$ is a residual function.

We now define the operators $\mathcal{L}_{i}: H^1(J_i, L^2(\Omega)) \cap L^2\bigl(J_i;X(\Omega)\bigr) \to L^2\bigl(J_i;L^2(\Omega)\bigr)$ by
\begin{eqnarray*}
\mathcal{L}_{i}&\equiv&\frac{\partial}{\partial t}-\nu\Delta-g^{\prime}[\bar{u}_{h,i}^{k}],\quad i=1,\cdots,l,
\end{eqnarray*}
where $g^{\prime}[\bar{u}_{h,i}^{k}]$ denote the Fr\'{e}chet derivative of $g$ at $\bar{u}_{h,i}^{k}$. 
In general, the linearlized operator of the nonlinear problem \Ref{para-nonlinear} can be represented as the left-hand side of the first equation in \Ref{para-linear}. Namely, we may denote
\begin{eqnarray*}\label{linearized-full}
\mathcal{L}_{i} = \Dev{}{t}-\nu\Delta+b_i\cdot\nabla+c_i,
\end{eqnarray*}
where coefficient functions $b_i$ and $c_i$ imply the restriction of the corresponding $b$ and $c$ in \Ref{para-linear} to the domain $\Omega\times J_i$.

\subsection{A Newton-type formulation}
Using the operators $\mathcal{L}_{i}$, a solution $\bar{u}$ of the problem \Ref{para-res} can be decomposed as $\bar{u}=v+w$ by using solutions $v$ and $w$ of
\begin{eqnarray}\label{para-res-v-newton}
\begin{array}{rclcl}
\mathcal{L}_{i}v&=&0&{\rm in}&\Omega\times J_i,\\
v(x,t)&=&0&{\rm on}&\partial\Omega\times J_i,\\
v(t_{i-1})&=&\epsilon_i&{\rm in}&\Omega,
\end{array}
\end{eqnarray}
and
\begin{eqnarray}\label{para-res-w-newton}
\begin{array}{rclcl}
\mathcal{L}_{i}w&=&g_i(w)&{\rm in}&\Omega\times J_i,\\
w(x,t)&=&0&{\rm on}&\partial\Omega\times J_i,\\
w(t_{i-1})&=&0&{\rm in}&\Omega,
\end{array}
\end{eqnarray}
respectively, where $g_i(w)\equiv g(v+w+\bar{u}_{h,i}^{k})-g(\bar{u}_{h,i}^{k})-g^{\prime}[\bar{u}_{h,i}^{k}](v+w)+\delta_i$ for $i=1,\cdots,l$.
Note that the solution $v$ of \Ref{para-res-v-newton} can be determined independently of $w$ in \Ref{para-res-w-newton}. Therefore, if the solution $v$ of the linear equation \Ref{para-res-v-newton} is numerically verified, then the problem \Ref{para-res} can be reduced to find a solution $w$ to the nonlinear problem \Ref{para-res-w-newton}. 
And the problem \Ref{para-res-w-newton} is rewritten as the following fixed-point equation of the compact map $\mathcal{L}_{i}^{-1}$:
\begin{eqnarray}\label{fixed-point}
w=\mathcal{L}_{i}^{-1}g_i(w).
\end{eqnarray}
Here, the map $\mathcal{L}_{i}^{-1}: L^2\bigl(J_i;L^2(\Omega)\bigr) \to V(\Omega,J_i) \cap L^2\bigl(J_i;X(\Omega)\bigr)$ is considered as the solution operator for the linear parabolic problem with homogeneous initial condition corresponding to the problem \Ref{para-linear}. 
For any positive constants $\alpha_i$ and $\beta_i$, we define the candidate set $W_{\alpha_i,\beta_i}$ as
$$
W_{\alpha_i,\beta_i}(\Omega,J_i):=\left\{w\in V(\Omega,J_i)\ ;\ \Norm{w}{L^2\bigl(J_i;H^1_0(\Omega)\bigr)}\le\alpha_i,\ \Norm{w}{V^1\bigl(J_i;L^2(\Omega)\bigr)}\le\beta_i\right\}.
$$
Taking notice of the continuity of the map $\mathcal{L}_{i}^{-1}$ on the space $L^2(J_i;H^1_0(\Omega))$, from the Schauder fixed-point theorem, if the set $W_{\alpha_i,\beta_i}(\Omega,J_i)$ satisfies
\begin{eqnarray}\label{include}
\mathcal{L}_{i}^{-1}g_i(W_{\alpha_i,\beta_i}(\Omega,J_i))\subset W_{\alpha_i,\beta_i}(\Omega,J_i),
\end{eqnarray}
then a fixed-point of \Ref{fixed-point} exists in the set $\overline{W_{\alpha_i,\beta_i}(\Omega,J_i)}$, where $\overline{W_{\alpha_i,\beta_i}(\Omega,J_i)}$ stands for the closure of the set $W_{\alpha_i,\beta_i}(\Omega,J_i)$ in $L^2(J_i;H^1_0(\Omega))$. 
Moreover, for an arbitrary $w \in W_{\alpha_i,\beta_i}(\Omega,J_i)$, setting $\tilde{w} : = \mathcal{L}_{i}^{-1}g_i(w)$ by Lemma \ref{le:Cdt} and Theorem \ref{thm:main} it holds that
\begin{eqnarray}\label{verification-condition}
\begin{array}{rclcl}
\Norm{\tilde{w}}{L^2\bigl(J_i;H^1_0(\Omega)\bigr)}
&\le&{\cal M}_{1_i}\Norm{g_i(w)}{L^2\bigl(J_i;L^2(\Omega)\bigr)},\\
\Norm{\tilde{w}}{L^2\bigl(J_i;L^2(\Omega)\bigr)}
&\le&{\cal M}_{0_i}\Norm{g_i(w)}{L^2\bigl(J_i;L^2(\Omega)\bigr)},\\
\Norm{\tilde{w}(t_i)}{L^2(\Omega)}
&\le&{\cal M}_{t_i}\Norm{g_i(w)}{L^2\bigl(J_i;L^2(\Omega)\bigr)},\\
\Norm{\tilde{w}(t)}{H^1_0(\Omega)}
&\le&\sqrt{\frac{1}{\nu}}C_{\Delta_i}\Norm{g_i(w)}{L^2\bigl(J_i;L^2(\Omega)\bigr)},\quad\forall t\in J_i,\\
\Norm{\tilde{w}}{V^1\bigl(J_i;L^2(\Omega)\bigr)}
&\le&C_{\Delta_i}\Norm{g_i(w)}{L^2\bigl(J_i;L^2(\Omega)\bigr)}.
\end{array}
\end{eqnarray}
Therefore, defining the function $G(\alpha_i,\beta_i)$ of $\alpha_i$ and $\beta_i$ satisfying 
\begin{eqnarray}\label{G}
\sup_{w\in W_{\alpha_i,\beta_i}(\Omega_1,J_i)}\Norm{g_i(w)}{L^2\bigl(J_i;L^2(\Omega_1)\bigr)}\le G(\alpha_i,\beta_i),
\end{eqnarray}
the sufficient condition of \Ref{include} is given as follows:
\begin{eqnarray}\label{condition}
{\cal M}_{1_i}G(\alpha_i,\beta_i)<\alpha_i,\quad
C_{\Delta_i}G(\alpha_i,\beta_i)<\beta_i.
\end{eqnarray}
\subsection{The estimate of $v$ for the linear problem}

Let $v$ be a solution of the problem \Ref{para-res-v-newton}, and let $\hat{v}$ be a solution of the following problem:
\begin{eqnarray}\label{para-res-v}
\begin{array}{rclcl}
\Dev{}{t}\hat{v}-\nu\Delta\hat{v}&=&0&{\rm in}&\Omega\times J_i,\\
\hat{v}(x,t)&=&0&{\rm on}&\partial\Omega\times J_i,\\
\hat{v}(t_{i-1})&=&\epsilon_i&{\rm in}&\Omega.
\end{array}
\end{eqnarray}

Furthermore, by using the above $\hat{v}$, we define the function $v_0$ as a solution of the following linear equation with homogeneous initial condition: 
\begin{eqnarray}\label{para-res-v*-newton}
\begin{array}{rclcl}
\mathcal{L}_{i}v_0&=&-b_{i}\cdot\nabla\hat{v}-c_{i}\hat{v}&{\rm in}&\Omega\times J_i,\\
v_0(x,t)&=&0&{\rm on}&\partial\Omega\times J_i,\\
v_0(t_{i-1})&=&0&{\rm in}&\Omega.
\end{array}
\end{eqnarray}
Then the solution $v$ of \Ref{para-res-v-newton} is written as $v=\hat{v}+v_0$.

Now, noting that, by using the well known spectral theory, e.g., \cite{Barbu 1998}, \cite{Miuguchi} etc., the solution $\hat{v}$ of \Ref{para-res-v} is represented as follows:
$$
\hat{v}(t) = \exp(-(t-t_{i-1})A)\epsilon_i \quad t \in J_i,
$$
where $\exp(-tA)_{t\ge0}$ means a semigroup generated by $A\equiv-\nu\Delta$.

Let $\lambda_{\min} >0$ be the smallest eigenvalue of $A$, for example, if $\Omega=(0,1)$ then $\lambda_{\min}=\nu\pi^2$. ($\lambda_{\min}=2\nu\pi^2$ if $\Omega=(0,1)^2$). 
Here, we denote $\rho(T_i): =\exp(-\lambda_{\min}T_i)$ and $\rho_{\Omega}(T_i):=\sqrt{\frac{1}{2\lambda_{\min}}(1-\rho(2T_i))}$. \\

We set constants $C_{b_i}:=\sqrt{\sum_{j=1}^d\Norm{b_{i,j}}{L^\infty\bigl(J_i;L^\infty(\Omega)\bigr)}^2}$ and $C_{c_i}:=\Norm{c_i}{L^\infty\bigl(J_i;L^\infty(\Omega)\bigr)}$.
Then we have the following lemma.
\begin{lemma}\label{est-v=v-hat+v-0}
If $\lambda_{\min}>0$ then the solution $v$ of \Ref{para-res-v-newton} is estimated as follows:
\begin{eqnarray*}
\Norm{v(t_i)}{H^1_0(\Omega)}
&\le&\rho(T_i)\Norm{\epsilon_i}{H^1_0(\Omega)}+\sqrt{\frac{1}{\nu}}C_{\Delta_i}\rho_{\Omega}(T_i)\left(C_{b_i}\Norm{\epsilon_i}{H^1_0(\Omega)}+C_{c_i}\Norm{\epsilon_i}{L^2(\Omega)}\right),\\
\Norm{v(t_i)}{L^2(\Omega)}
&\le&\rho(T_i)\Norm{\epsilon_i}{L^2(\Omega)}+{\cal M}_{t_i}\rho_{\Omega}(T_i)\left(C_{b_i}\Norm{\epsilon_i}{H^1_0(\Omega)}+C_{c_i}\Norm{\epsilon_i}{L^2(\Omega)}\right),\\
\Norm{v}{L^2\bigl(J_i;H^1_0(\Omega)\bigr)}
&\le&\rho_{\Omega}(T_i)\Norm{\epsilon_i}{H^1_0(\Omega)}+{\cal M}_{1_i}\rho_{\Omega}(T_i)\left(C_{b_i}\Norm{\epsilon_i}{H^1_0(\Omega)}+C_{c_i}\Norm{\epsilon_i}{L^2(\Omega)}\right),\\
\Norm{v}{L^2\bigl(J_i;L^2(\Omega)\bigr)}
&\le&\rho_{\Omega}(T_i)\Norm{\epsilon_i}{L^2(\Omega)}+{\cal M}_{0_i}\rho_{\Omega}(T_i)\left(C_{b_i}\Norm{\epsilon_i}{H^1_0(\Omega)}+C_{c_i}\Norm{\epsilon_i}{L^2(\Omega)}\right).
\end{eqnarray*}
Moreover, we have
\begin{eqnarray*}
\Norm{v(t)}{H^1_0(\Omega)}
&\le&\Norm{\epsilon_i}{H^1_0(\Omega)}+\sqrt{\frac{1}{\nu}}C_{\Delta_i}\rho_{\Omega}(T_i)\left(C_{b_i}\Norm{\epsilon_i}{H^1_0(\Omega)}+C_{c_i}\Norm{\epsilon_i}{L^2(\Omega)}\right), \quad\forall t\in J_i,\\
\Norm{v}{V^1\bigl(J_i;L^2(\Omega)\bigr)}
&\le&\sqrt{\frac{\nu}{2}}\Norm{\epsilon_i}{H^1_0(\Omega)}+C_{\Delta_i}\rho_{\Omega}(T_i)\left(C_{b_i}\Norm{\epsilon_i}{H^1_0(\Omega)}+C_{c_i}\Norm{\epsilon_i}{L^2(\Omega)}\right).
\end{eqnarray*}
\end{lemma}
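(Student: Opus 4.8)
The plan is to use the decomposition $v=\hat{v}+v_0$ introduced just above the statement, apply the triangle inequality to each of the six norms, and then estimate the $\hat{v}$-part through the spectral representation $\hat{v}(t)=\exp(-(t-t_{i-1})A)\epsilon_i$ and the $v_0$-part through the linear estimates already established in Theorem \ref{thm:main} and Lemma \ref{le:Cdt}.

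For the $\hat{v}$-part I would expand $\epsilon_i=\sum_j a_j e_j$ in an $L^2(\Omega)$-orthonormal eigenbasis $\{e_j\}$ of $A=-\nu\triangle$ with eigenvalues $0<\lambda_{\min}=\lambda_1\le\lambda_2\le\cdots$, so that $\hat{v}(t)=\sum_j a_j\exp(-(t-t_{i-1})\lambda_j)e_j$, with $\Norm{\epsilon_i}{L^2(\Omega)}^2=\sum_j a_j^2$ and $\Norm{\epsilon_i}{H^1_0(\Omega)}^2=\nu^{-1}\sum_j a_j^2\lambda_j$. The bounds at $t=t_i$ then follow at once from $\exp(-T_i\lambda_j)\le\exp(-T_i\lambda_{\min})=\rho(T_i)$, used in both the $L^2$ and the $H^1_0$ inner products. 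For the time-integrated norms I would compute $\int_0^{T_i}\exp(-2s\lambda_j)\,ds=\frac{1}{2\lambda_j}\bigl(1-\exp(-2T_i\lambda_j)\bigr)$ and bound this factor uniformly in $j$ by $\rho_{\Omega}(T_i)^2$; for the pointwise $H^1_0$ bound at general $t$ I would simply use $\exp(-\cdot)\le1$ to get $\Norm{\hat{v}(t)}{H^1_0(\Omega)}\le\Norm{\epsilon_i}{H^1_0(\Omega)}$; and for the $V^1$ norm I would compute $\Norm{\hat{v}}{V^1(J_i;L^2(\Omega))}^2=\sum_j a_j^2\lambda_j^2\cdot\frac{1}{2\lambda_j}\bigl(1-\exp(-2T_i\lambda_j)\bigr)\le\frac12\sum_j a_j^2\lambda_j=\frac{\nu}{2}\Norm{\epsilon_i}{H^1_0(\Omega)}^2$, using $1-\exp(-2T_i\lambda_j)\le1$.

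For the $v_0$-part, since \Ref{para-res-v*-newton} is exactly the linear problem \Ref{para-linear} with source $f=-b_i\cdot\nabla\hat{v}-c_i\hat{v}$ and homogeneous initial data, I would apply Theorem \ref{thm:main} and Lemma \ref{le:Cdt} verbatim to obtain the $\mathcal{M}_{1_i},\mathcal{M}_{0_i},\mathcal{M}_{t_i},C_{\Delta_i}$ bounds in terms of $\Norm{f}{L^2(J_i;L^2(\Omega))}$. It then remains to estimate the source by $\Norm{f}{L^2(J_i;L^2(\Omega))}\le C_{b_i}\Norm{\hat{v}}{L^2(J_i;H^1_0(\Omega))}+C_{c_i}\Norm{\hat{v}}{L^2(J_i;L^2(\Omega))}$ and to insert the two $\rho_{\Omega}(T_i)$-estimates for $\hat{v}$ from the previous step, which yields $\Norm{f}{L^2(J_i;L^2(\Omega))}\le\rho_{\Omega}(T_i)\bigl(C_{b_i}\Norm{\epsilon_i}{H^1_0(\Omega)}+C_{c_i}\Norm{\epsilon_i}{L^2(\Omega)}\bigr)$, precisely the common factor appearing on every right-hand side. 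Summing the $\hat{v}$- and $v_0$-contributions through the triangle inequality then produces all six inequalities.

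The eigen-expansion bookkeeping and the Cauchy--Schwarz estimate $\Abs{b_i\cdot\nabla\hat{v}}\le C_{b_i}\Abs{\nabla\hat{v}}$ are routine; the one genuinely delicate point is the claim $\frac{1}{2\lambda_j}\bigl(1-\exp(-2T_i\lambda_j)\bigr)\le\rho_{\Omega}(T_i)^2$, i.e.\ that $\lambda\mapsto\frac{1-\exp(-2T_i\lambda)}{2\lambda}$ is nonincreasing on $(0,\infty)$, so that its supremum over the spectrum is attained at $\lambda_{\min}$. This I would verify by differentiation, reducing it to the elementary inequality $(y+1)\exp(-y)\le1$ for $y\ge0$. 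A minor notational caveat is that, since neither $\hat{v}$ nor $v$ vanishes at $t_{i-1}$, the symbol $\Norm{\cdot}{V^1(J_i;L^2(\Omega))}$ is read here as $\Norm{\partial_t(\cdot)}{L^2(J_i;L^2(\Omega))}$, for which the triangle inequality across the decomposition still holds.
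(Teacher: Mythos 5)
Your proposal is correct and follows essentially the same route as the paper: decompose $v=\hat{v}+v_0$, bound $\hat{v}$ by spectral decay (producing the $\rho(T_i)$ and $\rho_{\Omega}(T_i)$ factors and the $\sqrt{\nu/2}$ bound on $\Norm{\partial_t\hat{v}}{L^2\bigl(J_i;L^2(\Omega)\bigr)}$), bound $v_0$ by applying Lemma \ref{le:Cdt} and Theorem \ref{thm:main} to the linear problem with source $-b_i\cdot\nabla\hat{v}-c_i\hat{v}$ estimated via \Ref{est007}, and finish with the triangle inequality. The only cosmetic difference is that you derive the time-integrated and $V^1$ bounds for $\hat{v}$ by explicit per-mode integration, which forces you to check the monotonicity of $\lambda\mapsto\bigl(1-e^{-2T_i\lambda}\bigr)/(2\lambda)$, whereas the paper simply integrates the pointwise-in-time decay estimate and uses an energy identity for the $V^1$ bound; both yield identical constants.
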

\begin{proof}
First, by the well known property of spectral theory, we have
\begin{eqnarray*}
\Norm{\hat{v}(t)}{H^1_0(\Omega)}
&\le&\exp(-\lambda_{\min}(t-t_{i-1}))\Norm{\epsilon_i}{H^1_0(\Omega)}\quad\forall t\in J_i,\\
\Norm{\hat{v}(t)}{L^2(\Omega)}
&\le&\exp(-\lambda_{\min}(t-t_{i-1}))\Norm{\epsilon_i}{L^2(\Omega)}\quad\forall t\in J_i.
\end{eqnarray*}
Also, noting that $\exp(-\lambda_{\min}(t-t_{i-1}))\le 1$ for $t\in J_i$ implies $\Norm{\hat{v}(t)}{H^1_0(\Omega)}\le\Norm{\epsilon_i}{H^1_0(\Omega)}$.
Thus, by integrating the above inequalities in $t$ on $J_i$, we have
\begin{eqnarray*}
\Norm{\hat{v}}{L^2\bigl(J_i;H^1_0(\Omega)\bigr)}^2
&=&\int_{J_i}\Norm{\hat{v}(t)}{H^1_0(\Omega)}^2dt
\le\Norm{\epsilon_i}{H^1_0(\Omega)}^2\int_{J_i}\exp(-2\lambda_{\min}(t-t_{i-1}))dt,\\
\Norm{\hat{v}}{L^2\bigl(J_i;L^2(\Omega)\bigr)}^2
&=&\int_{J_i}\Norm{\hat{v}(t)}{L^2(\Omega)}^2dt
\le\Norm{\epsilon_i}{L^2(\Omega)}^2\int_{J_i}\exp(-2\lambda_{\min}(t-t_{i-1}))dt,
\end{eqnarray*}
and
$$
\Norm{\hat{v}(t_i)}{H^1_0(\Omega)}
\le\rho(T_i)\Norm{\epsilon_i}{H^1_0(\Omega)},\quad 
\Norm{\hat{v}(t_i)}{L^2(\Omega)}
\le\rho(T_i)\Norm{\epsilon_i}{L^2(\Omega)}.
$$
Hence, by the simple computation, we obtain 
\begin{eqnarray}\label{est007}
\Norm{\hat{v}}{L^2\bigl(J_i;H^1_0(\Omega)\bigr)}
\le\rho_{\Omega}(T_i)\Norm{\epsilon_i}{H^1_0(\Omega)},\quad
\Norm{\hat{v}}{L^2\bigl(J_i;L^2(\Omega)\bigr)}
\le\rho_{\Omega}(T_i)\Norm{\epsilon_i}{L^2(\Omega)}.
\end{eqnarray}
Next, by the definition of $\hat{v}$, we have, for any $t\in J_i$,
\begin{eqnarray*}
0=\Inner{\Dev{}{t}\hat{v}(t)}{\Dev{}{t}\hat{v}(t)}{L^2(\Omega)} + \nu\Inner{\Delta\hat{v}(t)}{\Dev{}{t}\hat{v}(t)}{L^2(\Omega)}=\Norm{\Dev{}{t}\hat{v}(t)}{L^2(\Omega)}^2+\frac{\nu}{2}\frac{d}{dt}\Norm{\hat{v}(t)}{H^1_0(\Omega)}^2,
\end{eqnarray*}
which yields, by integrating in $t$ on $J_i$, 
\begin{eqnarray*}
\Norm{\hat{v}}{V^1\bigl(J_i;L^2(\Omega)\bigr)}^2 
+\frac{\nu}{2}\Norm{\hat{v}(t_{i})}{H^1_0(\Omega)}^2
&=&\frac{\nu}{2}\Norm{\epsilon_i}{H^1_0(\Omega)}^2.
\end{eqnarray*}

On the other hand, by applying the same arguments as in deriving the estimations \Ref{verification-condition}, we have the following estimates
\begin{eqnarray}\label{est-v-0}
\begin{array}{rclcl}
\Norm{v_0}{L^2\bigl(J_i;H^1_0(\Omega)\bigr)}
&\le&{\cal M}_{1_i}\Norm{-b_{i}\cdot\nabla\hat{v}-c_{i}\hat{v}}{L^2\bigl(J_i;L^2(\Omega)\bigr)},\\
\Norm{v_0}{L^2\bigl(J_i;L^2(\Omega)\bigr)}
&\le&{\cal M}_{0_i}\Norm{-b_{i}\cdot\nabla\hat{v}-c_{i}\hat{v}}{L^2\bigl(J_i;L^2(\Omega)\bigr)},\\
\Norm{v_0(t_i)}{L^2(\Omega)}
&\le&{\cal M}_{t_i}\Norm{-b_{i}\cdot\nabla\hat{v}-c_{i}\hat{v}}{L^2\bigl(J_i;L^2(\Omega)\bigr)},\\
\Norm{v_0(t)}{H^1_0(\Omega)}
&\le&\sqrt{\frac{1}{\nu}}C_{\Delta_i}\Norm{-b_{i}\cdot\nabla\hat{v}-c_{i}\hat{v}}{L^2\bigl(J_i;L^2(\Omega)\bigr)}, \quad\forall t\in J_i,\\
\Norm{v_0}{V^1\bigl(J_i;L^2(\Omega)\bigr)}
&\le&C_{\Delta_i}\Norm{-b_{i}\cdot\nabla\hat{v}-c_{i}\hat{v}}{L^2\bigl(J_i;L^2(\Omega)\bigr)}.
\end{array}
\end{eqnarray}
Also, from the estimates \Ref{est007}, it follows that 
\begin{eqnarray}\label{g(u-h)}
\begin{array}{rclcl}
\Norm{-b_{i}\cdot\nabla\hat{v}-c_{i}\hat{v}}{L^2\bigl(J_i;L^2(\Omega)\bigr)}
&\le&C_{b_i}\Norm{\hat{v}}{L^2\bigl(J_i;H^1_0(\Omega)\bigr)}+C_{c_i}\Norm{\hat{v}}{L^2\bigl(J_i;L^2(\Omega)\bigr)}\\
&\le&\rho_{\Omega}(T_i)\left(C_{b_i}\Norm{\epsilon_i}{H^1_0(\Omega)}+C_{c_i}\Norm{\epsilon_i}{L^2(\Omega)}\right).
\end{array}
\end{eqnarray}
Therefore, since $v=\hat{v}+v_0$, combining the above arguments on $\hat{v}$ with the estimates \Ref{est-v-0} on $v_0$ and \Ref{g(u-h)}, we obtain the desired conclusion of the lemma.
\end{proof}

Finally, in the case of one space dimension, we note that the $L^{\infty}$ estimates for $v$ can be obtained by using Lemma \ref{est-v=v-hat+v-0} and the result in \cite{Schultz 1973} (see p.8). For example, if $\Omega=(0,1)$, then we have:
\begin{eqnarray}\label{L-infty}
\begin{array}{rcl}
\Norm{v}{L^\infty\bigl(J_i;L^\infty(\Omega)\bigr)}
&=&{\rm ess} \sup_{t\in J_i}{\rm ess} \sup_{x\in\Omega}|v(x,t)|\\ 
&\le&\frac{1}{2}{\rm ess} \sup_{t\in J_i}\Norm{v(t)}{H^1_0(\Omega)}\\
&\le&\frac{1}{2}\left(\Norm{\epsilon_i}{H^1_0(\Omega)}+\sqrt{\frac{1}{\nu}}C_{\Delta_i}\rho_{\Omega}(T_i)\left(C_{b_i}\Norm{\epsilon_i}{H^1_0(\Omega)}+C_{c_i}\Norm{\epsilon_i}{L^2(\Omega)}\right)\right).
\end{array}
\end{eqnarray}
\subsection{Some remarks on the estimation for nonlinear terms in $g_i(w)$}
\label{w-g_i(w)}

Since some nonlinear terms in $v$ and $w$ appear in the right-hand side of the problem \Ref{fixed-point}, in order to validate the verification condition \Ref{condition}, we need several kinds of techniques to estimate them. In what follows, we only consider for one dimensional case, i.e., $\Omega=(0,1)=:\Omega_1$ and, as an example of a typical nonlinear term, we show how to estimate the power of $v$ or $w$. 

First, observe that the inequality for any $v \in L^{\infty}\bigl(J_i;H^1_0(\Omega_1)\bigr)$
\begin{eqnarray}\label{v^p-est}
\Norm{v^p}{L^2\bigl(J_i;L^2(\Omega_1)\bigr)}&\leq & \Norm{v}{L^\infty\bigl(J_i;L^\infty(\Omega_1)\bigr)}^{p-1}\Norm{v}{L^2\bigl(J_i;L^2(\Omega_1)\bigr)},
\end{eqnarray}
which yields the desired estimates by using the results in Lemma \ref{est-v=v-hat+v-0} and \Ref{L-infty}.

Next, we estimate $\Norm{w^p}{L^2\bigl(J_i;L^2(\Omega_1)\bigr)}$ as below.
For any $w\in W_{\alpha_i,\beta_i}(\Omega_1,J_i)$, since $w(x,t_{i-1})=0$, we extend it to a function $\hat{w}$ on $\hat{J_i} \equiv (t_{i-1},2t_i-t_{i-1})$ satisfying the symmetry with respect to $t=t_{i}$. Then, noting that $\hat{w}\in H^1_0(\Omega_1\times \hat{J}_i)$, we apply the embedding theorem (e.g.\cite{Talenti}) to $\hat{w}$ to obtain the following estimates:
$$
\Norm{\hat{w}^p}{L^2(\Omega_1\times \hat{J}_i)}
\le
\sqrt{2T_i}K_w(p)^p\Norm{\hat{w}}{H^1_0(\Omega_1\times \hat{J}_i)}^p
\quad\mbox{where}\quad
K_w(p)\equiv \frac{p}{2\pi}(p-1)^{-\frac{1}{2p}}\left(\sin\frac{\pi}{p}\right)^\frac{1}{2},\quad p>1.
$$
It implies that
\begin{eqnarray*}
\Norm{w^p}{L^2\bigl(J_i;L^2(\Omega_1)\bigr)}^2
&=&\frac{1}{2}\Norm{\hat{w}^p}{L^2(\Omega_1\times \hat{J}_i)}^2\\
&\le&T_iK_w(p)^{2p}\Norm{\hat{w}}{H^1_0(\Omega_1\times \hat{J}_i)}^{2p}\\
&=&2^{p}T_iK(p)^{2p}\left(\Norm{w}{V^1\bigl(J_i;L^2(\Omega_1)\bigr)}^2+\Norm{w}{L^2\bigl(J_i;H^1_0(\Omega_1)\bigr)}^2\right)^{p}.
\end{eqnarray*}
Therefore, we have
\begin{eqnarray}\label{embedding}
\Norm{w^p}{L^2\bigl(J_i;L^2(\Omega_1)\bigr)}
\le\tilde{K}_w(p)\left(\Norm{w}{V^1\bigl(J_i;L^2(\Omega_1)\bigr)}^2+\Norm{w}{L^2\bigl(J_i;H^1_0(\Omega_1)\bigr)}^2\right)^{\frac{p}{2}},\quad p>1,
\end{eqnarray}
where $\tilde{K}_w(p)\equiv \sqrt{2^{p}T_i}K_w(p)^{p}$.
Note that $\tilde{K}_w(2)<0.2027\sqrt{T_i}$ and $\tilde{K}_w(3)<0.1755\sqrt{T_i}$.

We now show some examples of the estimation for later use based on the above discussion for the problem \Ref{para-res} with quadratic and cubic nonlinearities in $\bar{u}$. Let $v$ be a solution of \Ref{para-res-v-newton}, and let $w$ be an element in a candidate set of the problem \Ref{para-res-w-newton}, i.e., $w\in W_{\alpha_i,\beta_i}(\Omega_1,J_i)$. Then we have the following estimates
\begin{eqnarray}
\Norm{(v+w)^2}{L^2\bigl(J_i;L^2(\Omega_1)\bigr)}
&\le&
\Norm{v^2}{L^2\bigl(J_i;L^2(\Omega_1)\bigr)}
+\Norm{w^2}{L^2\bigl(J_i;L^2(\Omega_1)\bigr)}
+2\Norm{vw}{L^2\bigl(J_i;L^2(\Omega_1)\bigr)} \nonumber \\
&\le&
\Norm{v}{L^\infty\bigl(J_i;L^\infty(\Omega_1)\bigr)}\Norm{v}{L^2\bigl(J_i;L^2(\Omega_1)\bigr)}
+\tilde{K}_w(2)\left(\alpha_i^2+\beta_i^2\right) \nonumber \\
& & \hspace*{5cm}+C_p\Norm{v}{L^\infty\bigl(J_i;L^\infty(\Omega_1)\bigr)}\alpha_i \nonumber \\
&=:&G_2(\alpha_i,\beta_i) \label{G_2}
\end{eqnarray}
and
\begin{eqnarray}
\Norm{(v+w)^3}{L^2\bigl(J_i;L^2(\Omega_1)\bigr)}
&\le&
\Norm{v^3}{L^2\bigl(J_i;L^2(\Omega_1)\bigr)}
+\Norm{w^3}{L^2\bigl(J_i;L^2(\Omega_1)\bigr)} \nonumber \\
&&
+3\Norm{vw^2}{L^2\bigl(J_i;L^2(\Omega_1)\bigr)}
+3\Norm{v^2w}{L^2\bigl(J_i;L^2(\Omega_1)\bigr)} \nonumber \\
&\le&
\Norm{v}{L^\infty\bigl(J_i;L^\infty(\Omega_1)\bigr)}^2\Norm{v}{L^2\bigl(J_i;L^2(\Omega_1)\bigr)}
+\tilde{K}_w(3)\left(\alpha_i^2+\beta_i^2\right)^{\frac{3}{2}} \nonumber \\
&&
+3\Norm{v}{L^\infty\bigl(J_i;L^\infty(\Omega_1)\bigr)}\tilde{K}_w(2)\left(\alpha_i^2+\beta_i^2\right)
+3C_p\Norm{v}{L^\infty\bigl(J_i;L^\infty(\Omega_1)\bigr)}^2\alpha_i \nonumber \\
&=:&G_3(\alpha_i,\beta_i).\label{G_3}
\end{eqnarray}
Here $C_p>0$ is the Poincar\'e constant. (It is taken as $C_p=1/\pi$ in the case of $\Omega_1$.)
\setcounter{equation}{0}
\section{Numerical examples}
In this section, we show some examples whose solutions are verified by our method and, as in Section \ref{w-g_i(w)}, we only consider for one dimensional case, i.e., $\Omega=(0,1)=:\Omega_1$. \\
First, we describe several remarks on the verification step from the interval $J_i$ to $J_{i+1}$. Let $\alpha_i^*$ and $\beta_i^*$ be two positive numbers satisfying the condition \Ref{condition}. Then there exists a solution $w_i^*\in W_{\alpha_i^*,\beta_i^*}(\Omega_1,J_i)$ of the problem \Ref{para-res-w-newton} and the following estimates hold 
$$
\Norm{w_i^*(t_i)}{L^2(\Omega_1)}\le{\cal M}_{t_i}G(\alpha_i^*,\beta_i^*),\quad
\Norm{w_i^*(t_i)}{H^1_0(\Omega_1)}\le\sqrt{\frac{1}{\nu}}C_{\Delta_i}G(\alpha_i^*,\beta_i^*).
$$
When denoting $v_i^*$ as a solution of the problem \Ref{para-res-v-newton}, the solution $u_i^*$ of the nonlinear problem \Ref{para-nonlinear} on $\Omega_1\times J_i$ can be written by $u_i^*=\bar{u}_{h,i}^{k}+v_i^*+w_i^*$. 
Note that the initial condition of the next time-step problem in $\Omega_1\times J_{i+1}$ is given by $u_i^*(t_i)=\bar{u}_{h,i}^{k}(t_i)+v_i^*(t_i)+w_i^*(t_i)$ .
Since we take the approximate solution $\bar{u}_{h,i+1}\in \bar{S}_h^k(\Omega_1,J_{i+1})$ satisfying $\bar{u}_{h,i+1}(t_i)=\bar{u}_{h,i}^{k}(t_i)$, an initial function $\epsilon_{i+1}$ of the problem \Ref{para-res} in $\Omega_1\times J_{i+1}$ is given by 
$$
\epsilon_{i+1}:=v_i^*(t_i)+w_i^*(t_i),\quad i=1,\cdots,l-1.
$$
Therefore, we can obtain the following estimations:
\begin{eqnarray*}
\Norm{\epsilon_{i+1}}{H^1_0(\Omega_1)}
&\le&
\Norm{v_i^*(t_i)}{H^1_0(\Omega_1)}
+\sqrt{\frac{1}{\nu}}C_{\Delta_i}G(\alpha_i^*,\beta_i^*), \\
\Norm{\epsilon_{i+1}}{L^2(\Omega_1)}
&\le&
\Norm{v_i^*(t_i)}{L^2(\Omega_1)}
+{\cal M}_{t_i}G(\alpha_i^*,\beta_i^*).
\end{eqnarray*}
In the following examples, we take the basis of finite element subspaces $S_h(\Omega_1)$ and $V_k^1(J_i)$ as the piecewise linear (P1) function. On the other hand, for computing approximations $\bar{u}_{h,i}^{k}$ of nonlinear problems, we take the basis of finite element subspaces $\bar{S}_h(\Omega_1)$ and $\bar{V}_k^1(J_i)$ as the piecewise Hermite spline ($C^1$-class with 5-degree) function and the piecewise quadratic ($C^0$-class) function, respectively. 

\begin{example}\label{ex:Fujita}
Fujita-type equation:
$$
\nu = 1 \quad \mbox{and} \quad g(u):=u^2.
$$
We take the initial function $u_0$ as $u_0=32x(x-1)(x^2-x-1)$, and consider the problem for $\Omega_1=(0,1)$. ($u_0(1/2)=10$)
\end{example}

For the example \ref{ex:Fujita}, the linearized part $-g^{\prime}[\bar{u}_{h,i}^{k}]w$ is given by $-g^{\prime}[\bar{u}_{h,i}^{k}]w=-2\bar{u}_{h,i}^{k}w$ for $w\in W_{\alpha_i,\beta_i}(\Omega_1,J_i)$ then we obtain coefficient functions as $b_i=0$ and $c_i=-2\bar{u}_{h,i}^{k}$.
Moreover, it follows that
\begin{eqnarray*}
\Norm{g_i(w)}{L^2\bigl(J_i;L^2(\Omega_1)\bigr)}
&=&
\Norm{\delta_i+(v+w)^2}{L^2\bigl(J_i;L^2(\Omega_1)\bigr)}\\
&\le&
\Norm{\delta_i}{L^2\bigl(J_i;L^2(\Omega_1)\bigr)}
+G_2(\alpha_i,\beta_i),
\end{eqnarray*}
where $G_2(\alpha_i,\beta_i)$ is defined by \Ref{G_2}. 
Hence we define the function $G(\alpha_i,\beta_i)$ in the verification condition \Ref{G} by
\begin{eqnarray*}
G(\alpha_i,\beta_i)
&:=&
\Norm{\delta_i}{L^2\bigl(J_i;L^2(\Omega_1)\bigr)}
+G_2(\alpha_i,\beta_i).
\end{eqnarray*}
\begin{example}\label{ex:Allen}
Allen-Cahn equation:
$$
\nu = \frac{1}{150} \quad \mbox{and} \quad g(u):=u(1-u)(u-a),
$$
where $a>0$ is a constant. 
We take the initial function $u_0$ as $u_0=x(x-1)(x^2-x-1)$ and $a=0.01$, and consider the problem for $\Omega_1=(0,1)$. ($u_0(1/2)=0.3125$)
\end{example}

For the example \ref{ex:Allen}, the linearized part $-g^{\prime}[\bar{u}_{h,i}^{k}]w$ is given by $-g^{\prime}[\bar{u}_{h,i}^{k}]w=(a-2(1+a)\bar{u}_{h,i}^{k}+3(\bar{u}_{h,i}^{k})^2)w$ for $w\in W_{\alpha_i,\beta_i}(\Omega_1,J_i)$ then we obtain coefficient functions as $b_i=0$ and $c_i=a-2(1+a)\bar{u}_{h,i}^{k}+3(\bar{u}_{h,i}^{k})^2$.
Moreover, it follows that
\begin{eqnarray*}
\Norm{g_i(w)}{L^2\bigl(J_i;L^2(\Omega_1)\bigr)}
&=&
\Norm{\delta_i+d_i(v+w)^2+(v+w)^3}{L^2\bigl(J_i;L^2(\Omega_1)\bigr)}\\
&\le&
\Norm{\delta_i}{L^2\bigl(J_i;L^2(\Omega_1)\bigr)}
+C_{d_i}G_2(\alpha_i,\beta_i)
+G_3(\alpha_i,\beta_i),
\end{eqnarray*}
where $d_i=1+a-3\bar{u}_{h,i}^{k}$, $C_{d_i}:=\Norm{d_i}{L^\infty\bigl(J_i;L^\infty(\Omega_1)\bigr)}$ and $G_3(\alpha_i,\beta_i)$ is defined by \Ref{G_3}.
Thus we define the function $G(\alpha_i,\beta_i)$ in \Ref{G} by
\begin{eqnarray*}
G(\alpha_i,\beta_i)
&:=&
\Norm{\delta_i}{L^2\bigl(J_i;L^2(\Omega_1)\bigr)}
+C_{d_i}G_2(\alpha_i,\beta_i)
+G_3(\alpha_i,\beta_i).
\end{eqnarray*}
\begin{figure}
\begin{center}
\includegraphics[width=10cm,height=5cm]{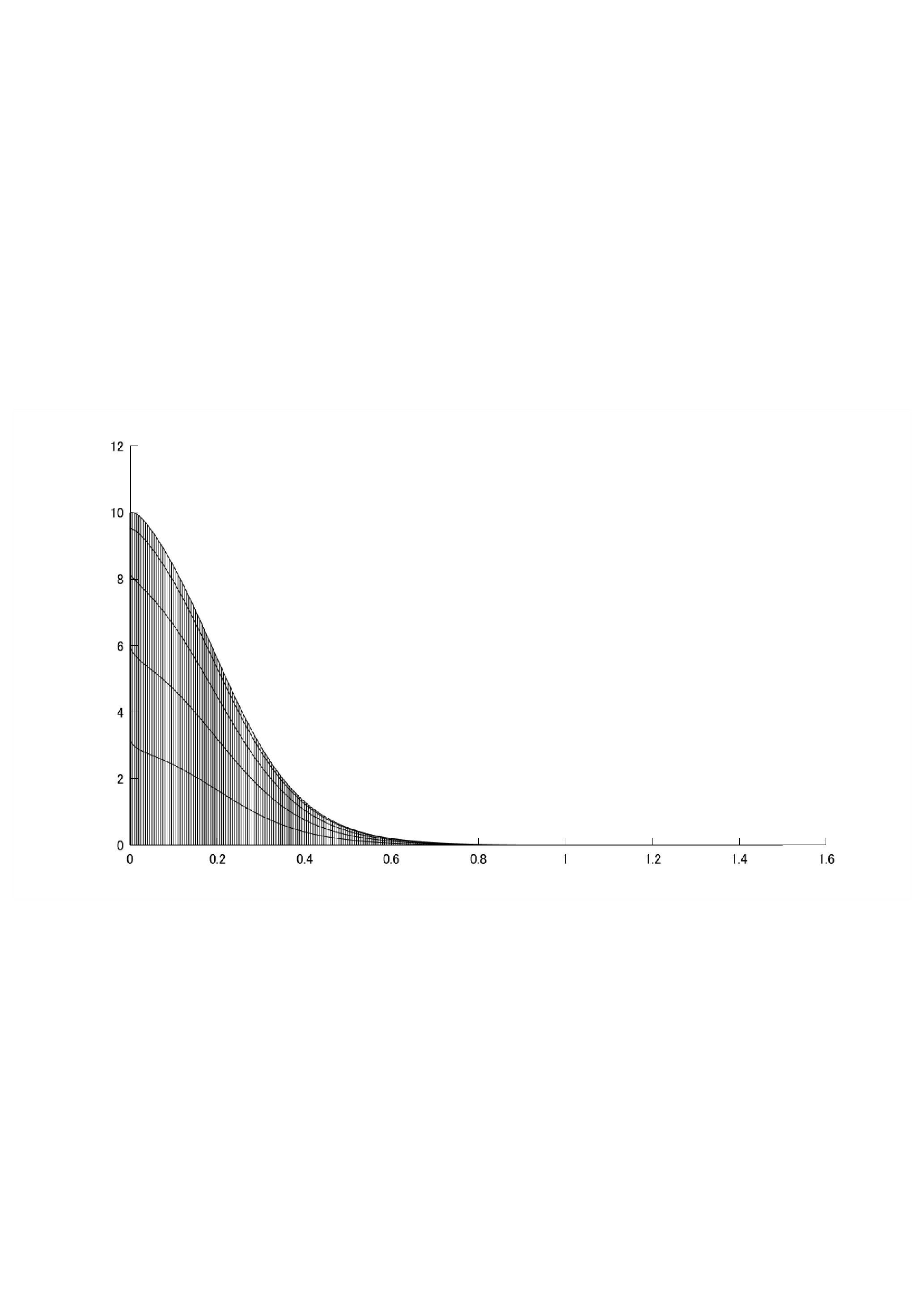}
\includegraphics[width=5cm,height=5cm]{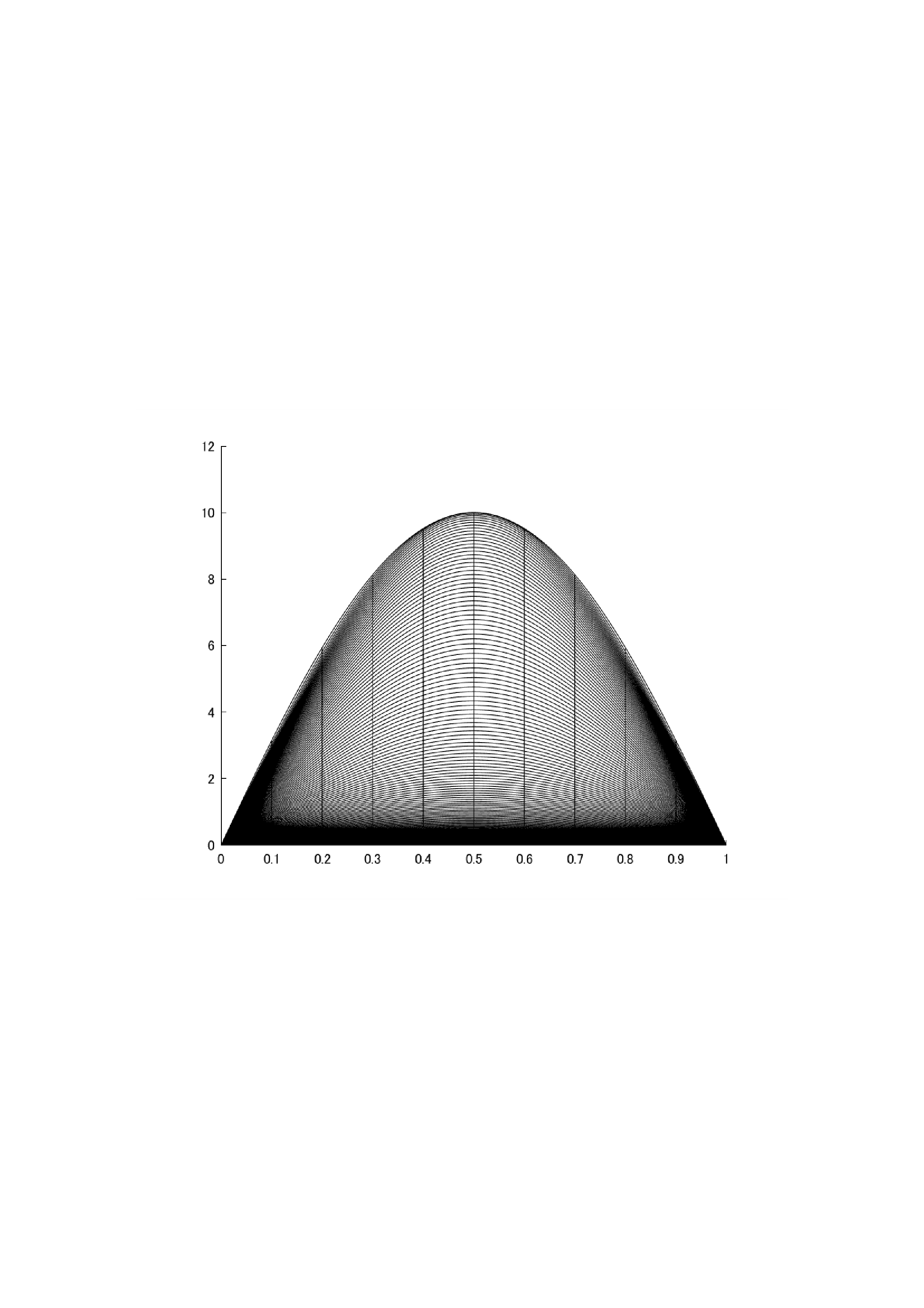}
\caption{Approximations $\bar{u}_{h,i}^{k}$ for Example \ref{ex:Fujita} ($\nu=1$)}\label{fig:Fujita}
\includegraphics[width=10cm,height=5cm]{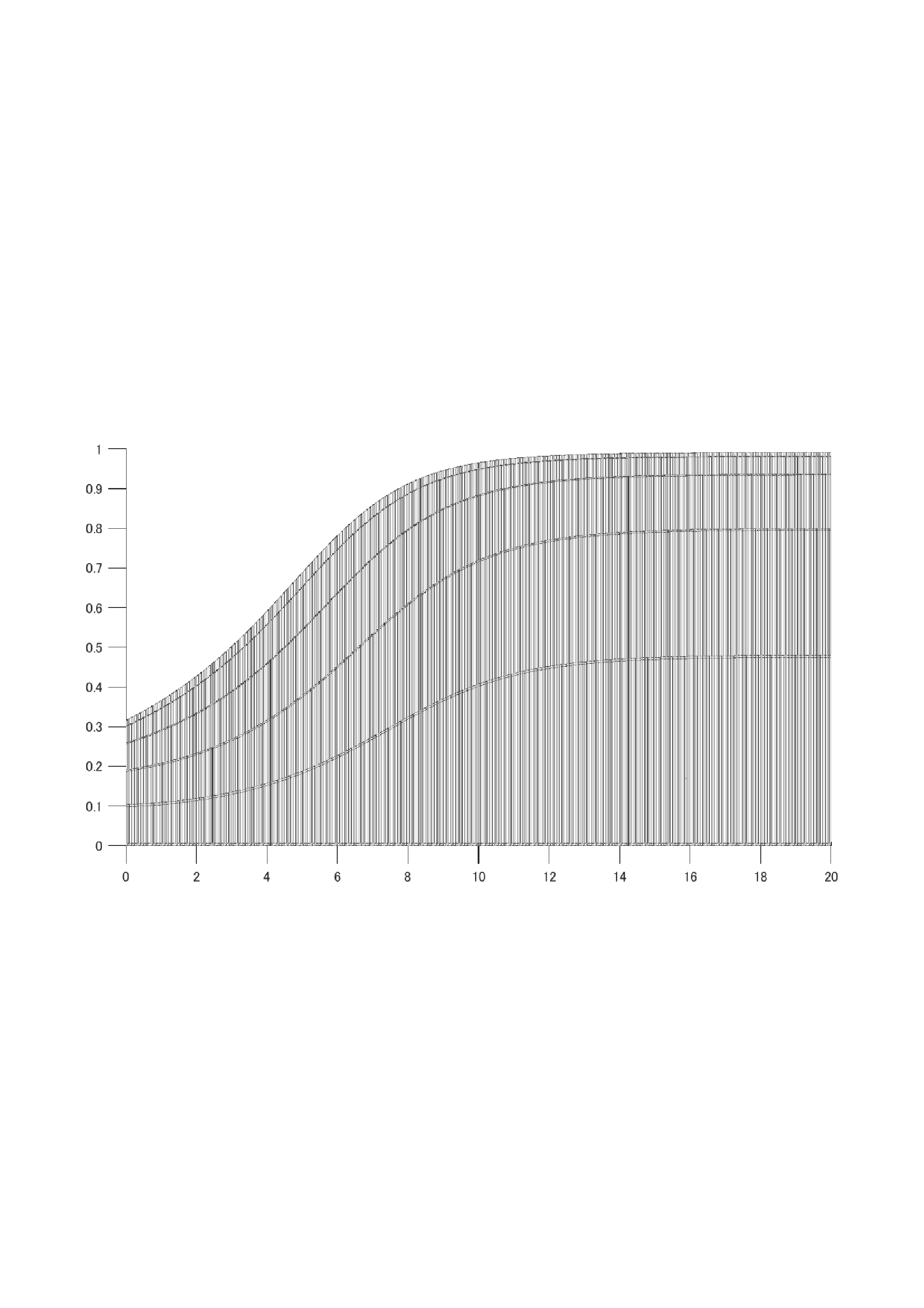}
\includegraphics[width=5cm,height=5cm]{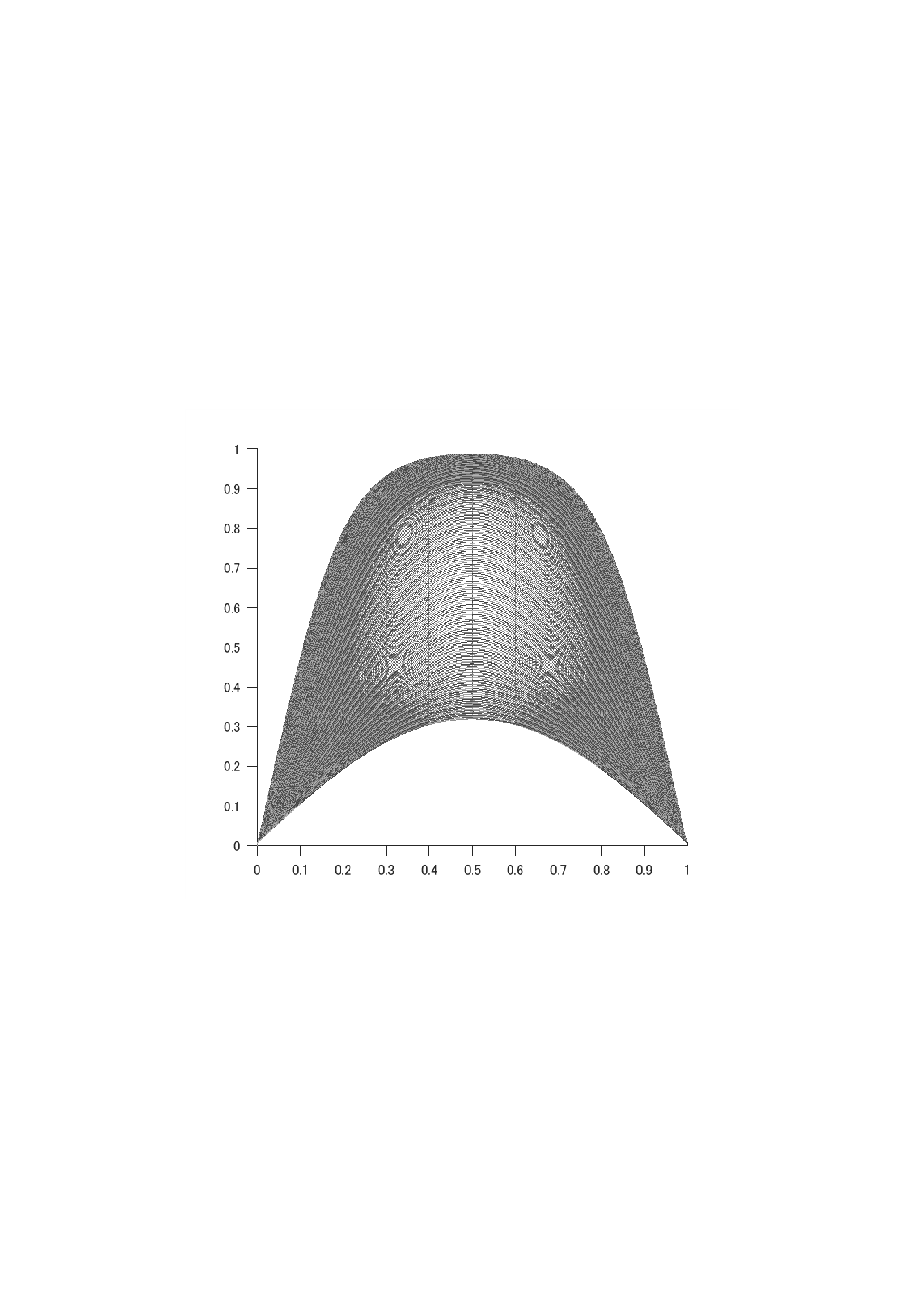}
\caption{Approximations $\bar{u}_{h,i}^{k}$ for Example \ref{ex:Allen} ($\nu=1/150$)}\label{fig:Allen}
\end{center}
\end{figure}
\begin{table}
\begin{center}
\caption{Verified numerical results for Example \ref{ex:Fujita} $T_i=0.1$, $(h,k)=(1/10,1/1000)$ }
\footnotesize
\begin{tabular}{|c||c|c|c|c|c|c|c|c|c|c||}
\hline
$i$&${\cal M}_{1_i}$&${\cal M}_{0_i}$&${\cal M}_{t_i}$&$C_{\Delta_i}$&$M_{1}$&$M_{0}$&$M_{T}$&$\alpha_i$&$\beta_i$&$\Norm{\delta_i}{L^2\bigl(J_i;L^2(\Omega_1)\bigr)}$\\
\hline\hline
1&1.035&0.230&1.452&5.616&0.261&0.082&0.426&9.31E-04&5.06E-03&8.90E-04\\
2&0.632&0.142&0.856&3.379&0.219&0.069&0.336&3.10E-03&1.65E-02&3.02E-04\\
3&0.393&0.090&0.524&2.011&0.180&0.057&0.267&4.40E-03&2.25E-02&1.64E-04\\
4&0.291&0.068&0.388&1.405&0.157&0.050&0.231&2.81E-03&1.35E-02&9.53E-05\\
5&0.250&0.059&0.337&1.156&0.147&0.046&0.217&9.22E-04&4.27E-03&6.06E-05\\
6&0.235&0.056&0.317&1.059&0.142&0.045&0.211&2.01E-04&9.13E-04&3.81E-05\\
7&0.229&0.055&0.310&1.022&0.141&0.045&0.209&3.74E-05&1.68E-04&2.36E-05\\
8&0.227&0.055&0.308&1.008&0.140&0.044&0.209&8.09E-06&3.60E-05&1.45E-05\\
9&0.226&0.054&0.307&1.003&0.140&0.044&0.208&2.71E-06&1.20E-05&8.84E-06\\
10&0.226&0.054&0.306&1.001&0.140&0.044&0.208&1.32E-06&5.88E-06&5.40E-06\\
$\vdots$&$\vdots$&$\vdots$&$\vdots$&$\vdots$&$\vdots$&$\vdots$&$\vdots$&$\vdots$&$\vdots$&$\vdots$\\
15&0.225&0.054&0.306&1.000&0.140&0.044&0.208&1.04E-07&4.61E-07&4.58E-07\\
$\vdots$&$\vdots$&$\vdots$&$\vdots$&$\vdots$&$\vdots$&$\vdots$&$\vdots$&$\vdots$&$\vdots$&$\vdots$\\
20&0.225&0.054&0.306&1.000&0.140&0.044&0.208&8.78E-09&3.91E-08&3.88E-08\\
30&0.225&0.054&0.306&1.000&0.140&0.044&0.208&6.25E-11&2.78E-10&2.77E-10\\
40&0.225&0.054&0.306&1.000&0.140&0.044&0.208&8.63E-12&3.80E-11&3.80E-11\\
50&0.225&0.054&0.306&1.000&0.140&0.044&0.208&8.63E-12&3.80E-11&3.80E-11\\
\multicolumn{11}{|c||}{Stop}\\
\hline
\end{tabular}
$\tilde{C}_1(h,k)=0.0857$ ($\gamma_1=0.999$),\quad$\tilde{C}_0(h,k)=0.0099$ ($\gamma_0=0.139$),\quad$\tilde{c}_0(h,k)=0.0978$ ($\gamma_T=0.707$). 
\label{table3-Fujita} 
\end{center}
\end{table} 
\begin{table}
\begin{center}
\caption{Verified numerical results for Example \ref{ex:Allen} $T_i=1$, $(h,k)=(1/64,1/128)$ }
\footnotesize
\begin{tabular}{|c||c|c|c|c|c|c|c|c|c|c||}
\hline
$i$&${\cal M}_{1_i}$&${\cal M}_{0_i}$&${\cal M}_{t_i}$&$C_{\Delta_i}$&$M_{1}$&$M_{0}$&$M_{T}$&$\alpha_i$&$\beta_i$&$\Norm{\delta_i}{L^2\bigl(J_i;L^2(\Omega_1)\bigr)}$\\
\hline\hline
1&9.581&0.788&1.420&1.260&6.175&0.706&1.138&3.75E-07&4.96E-08&3.90E-08\\
2&9.591&0.789&1.420&1.261&6.183&0.706&1.138&3.46E-07&4.54E-08&1.80E-08\\
3&9.577&0.785&1.410&1.259&6.173&0.702&1.128&5.63E-07&7.39E-08&2.59E-08\\
4&9.527&0.773&1.382&1.255&6.134&0.691&1.102&8.71E-07&1.15E-07&3.39E-08\\
5&9.442&0.751&1.336&1.248&6.070&0.670&1.058&1.37E-06&1.81E-07&4.99E-08\\
6&9.348&0.723&1.278&1.239&6.002&0.643&1.004&2.03E-06&2.70E-07&6.73E-08\\
7&9.619&0.707&1.255&1.336&5.951&0.615&0.950&4.00E-06&5.57E-07&8.60E-08\\
8&9.984&0.697&1.245&1.453&5.915&0.590&0.903&8.42E-06&1.23E-06&8.82E-08\\
9&10.213&0.685&1.230&1.527&5.884&0.568&0.865&1.76E-05&2.63E-06&1.04E-07\\
10&10.330&0.673&1.214&1.567&5.855&0.551&0.836&3.60E-05&5.44E-06&1.12E-07\\
11&10.380&0.663&1.200&1.587&5.829&0.539&0.816&7.30E-05&1.13E-05&1.07E-07\\
12&10.399&0.656&1.190&1.596&5.809&0.531&0.802&1.48E-04&2.28E-05&1.12E-07\\
13&10.405&0.652&1.183&1.601&5.795&0.526&0.794&3.00E-04&4.62E-05&1.25E-07\\
14&10.406&0.649&1.179&1.604&5.786&0.522&0.789&6.02E-04&9.27E-05&1.12E-07\\
15&10.406&0.647&1.176&1.605&5.781&0.520&0.786&1.22E-03&1.90E-04&1.22E-07\\
16&10.406&0.646&1.175&1.606&5.777&0.519&0.784&2.50E-03&3.85E-04&1.13E-07\\
17&10.405&0.645&1.174&1.606&5.775&0.519&0.783&5.21E-03&8.03E-04&1.22E-07\\
18&10.405&0.645&1.173&1.606&5.774&0.518&0.783&1.14E-02&1.77E-03&1.13E-07\\
19&10.405&0.645&1.173&1.606&5.773&0.518&0.782&2.83E-02&4.36E-03&1.24E-07\\
20&10.405&0.645&1.173&1.607&5.773&0.518&0.782&Inf&Inf&1.20E-07\\
\hline
\end{tabular}
$\tilde{C}_1(h,k)=2.594$ ($\gamma_1=0.999$),\quad$\tilde{C}_0(h,k)=0.053$ ($\gamma_0=0.038$),\quad$\tilde{c}_0(h,k)=0.204$ ($\gamma_T=0.057$). 
\label{table3-Allen} 
\end{center}
\end{table} 
\begin{figure}
\begin{center}
\includegraphics[width=7cm,height=7cm]{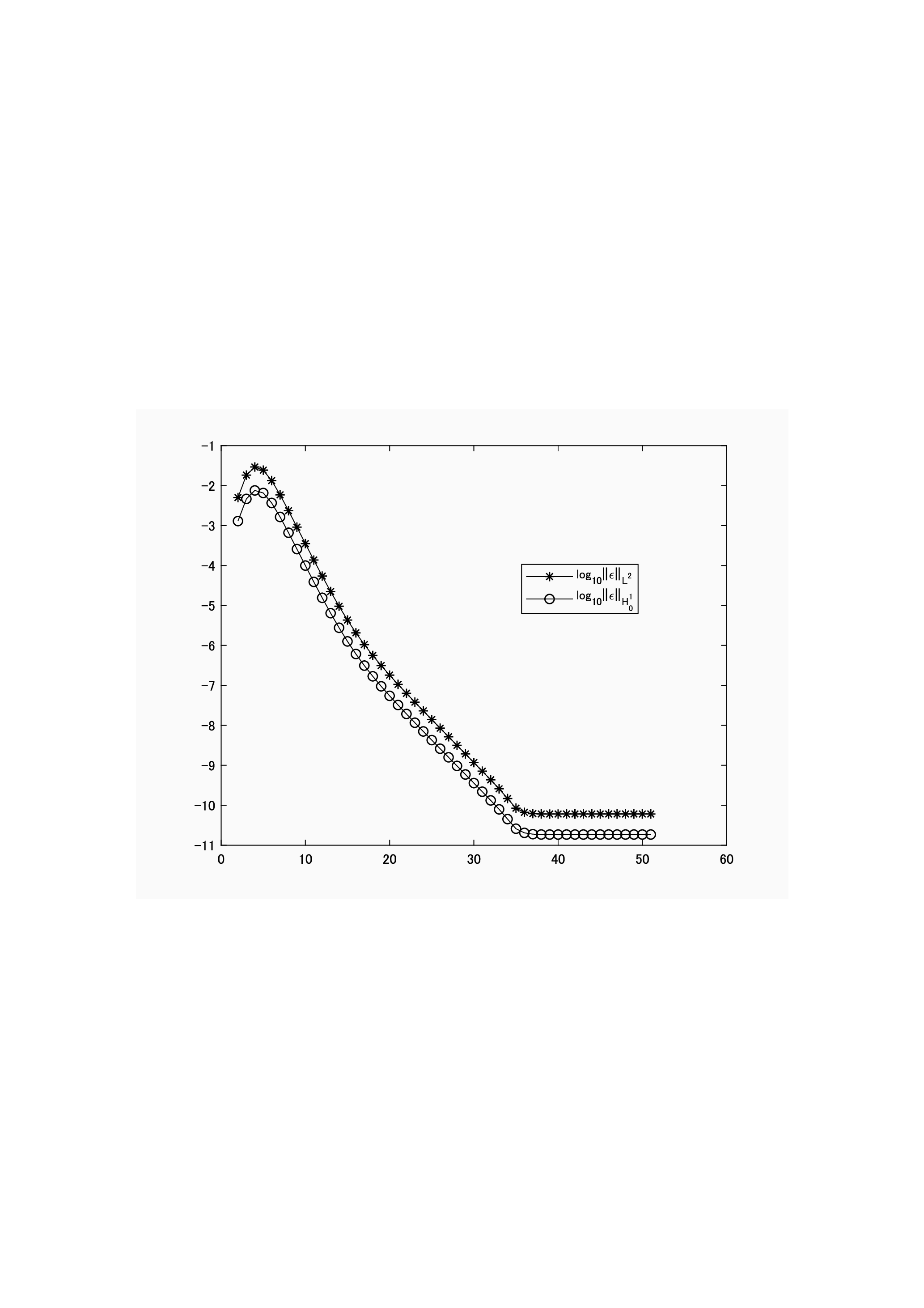}
\includegraphics[width=7cm,height=7cm]{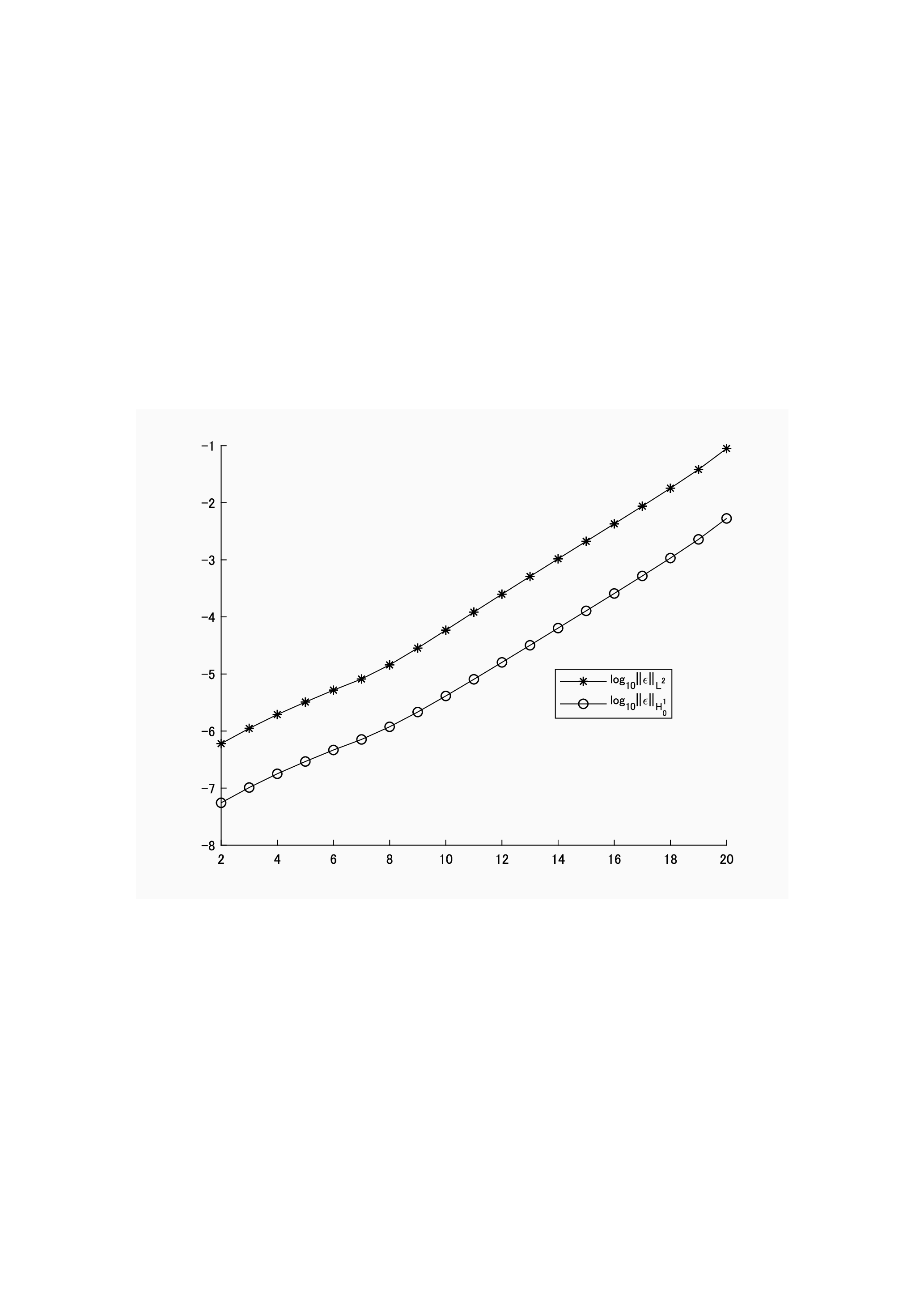}
\caption{$\log_{10}(\Norm{\epsilon_{i}}{L^2(\Omega_1)})$ and $\log_{10}(\Norm{\epsilon_{i}}{H^1_0(\Omega_1)})$ for Example \ref{ex:Fujita}(left) and Example \ref{ex:Allen}(right)}\label{fig:error}
\end{center}
\end{figure}

Verified computational results are shown in Table \ref{table3-Fujita}, Table \ref{table3-Allen} and Figure \ref{fig:error} as well as the approximate contours are illustrated in Figure \ref{fig:Fujita} and Figure \ref{fig:Allen}, respectively. In the left-hand sides of figures \ref{fig:Fujita} and \ref{fig:Allen}, the horizontal and vertical lines indicate the numbers of time-step and the size of the norms, respectively. And in the right-hand sides of these figures, these two directions imply the spatial coordinate axes and norms, respectively. Also we take uniform time-step size as $T_i=0.1$ and $T_i=1$ for Example \ref{ex:Fujita} and Example \ref{ex:Allen}, respectively. 
We compute approximate solutions $\bar{u}_{h,i}^{k}$ of Example \ref{ex:Fujita} and Example \ref{ex:Allen} by the double precision. Particularly, Figure \ref{fig:error} shows the accumulated error behavior with time progression for each time step (transverse line) $t_i$.
Hence it can be deduced the solution of Example 1 rapidly decreases as time increases.

{\bf Remark}:\ 
All computations in Tables are carried out on the Dell Precision 7920 Intel Xeon Gold 6134 CPU 3.20GHz by using INTLAB(var. 10.1), a tool box in MATLAB (var. R2018a) developed by Rump \cite{Rump INTLAB} for self-validating algorithms. Therefore, all numerical values in these tables are verified data in the sense of strictly rounding error control. Also we used Symbolic Math Toolbox for $\delta_i$, for the norm estimation of linearized operator $\mathcal{L}_{i}$, 
\setcounter{equation}{0}
\section{Conclusion}
We presented a numerical verification method of solutions for nonlinear parabolic initial boundary value problems.
Using our method, we showed numerically verified results for a solution from the initial value to the neighborhood of the stationary solution of Fujita-type equation and Allen-Cahn equation.
For the solution which decays to zero of Fujita-type equation, we succeeded in the verification without any accumulation of the error at each time step $t_i$, which suggests that for such kind of problems  our present approach should be really effective. On the other hand, 
in case of Allen-Cahn equation, the error actually accumulate at each time step. In such a case it is shown, by our numerical results, that the application of the theoretical analysis of the heat equation to estimate of the initial part $v$ should be more effective.
In conclusion, we can say that our method is the first effective approach to the numerical verification of solutions for actually realistic nonlinear evolution equations based on the finite element method by using a  Newton-type formulation.\\ \  \\
{\bf Acknowledgement}:\ 
This work was partially supported by JSPS KAKENHI Grant Number 18K03434, 18K03440 and JST CREST.


\end{document}